\def\R{\mathbb{R}}
\def\R{\mathbf{R}}
\def\highlightEdit{\blue}
\subjclass{Primary: 35Q83, 82C40.}
\keywords{Local well-posedness, Vlasov-Poisson-Landau system, kinetic theory, massless electron limit, massless electron system, Landau equation.}
 \def\blue{}
\newtheorem{theorem}{Theorem}[section]
\newtheorem{lemma}[theorem]{Lemma}
\newtheorem{proposition}[theorem]{Proposition}
\theoremstyle{definition}
\newtheorem{remark}[theorem]{Remark}
\newcommand{\ep}{\varepsilon}
\title[Local well-posedness for the VPL system]{Local Well-Posedness of the Vlasov-Poisson-Landau System and Related Models}
\thanks{
Patrick Flynn was partially supported by the National Science Foundation Graduate Research Fellowship, Grant No. 2040433. Thank you to Yan Guo, Beno\^it Pausader and Timur Yastrzhembskiy for helpful conversations and suggestions.}
\begin{document}
\maketitle
\centerline{\scshape
{Patrick Flynn$^{{\href{mailto:pflynn@math.ucla.edu}{\textrm{\Letter}}}*1}$}}
\medskip

{\footnotesize
 \centerline{$^1$University of California, Los Angeles, United States of America}
} 

\bigskip

\begin{abstract}
We prove local well-posedness for the Vlasov-Poisson-Landau system and the variant with massless electrons in a 3D periodic spatial domain for large initial data. This is accomplished by propagating weighted anisotropic $L^2$-based Sobolev norms. In the case of the massless electron system, we also carry out an analysis of the Poincar\'e-Poisson system. This is a companion paper to the author's previous work with Yan Guo \cite{flynn2024massless}.
\end{abstract}

\section{Introduction} 

Consider a plasma whose ion and electron distributions are given by $F_+(t,x,v)$, $F_-(t,x,v)$, respectively, which satisfy the two-species Vlasov-Poisson-Landau system,
\begin{equation}\label{eq:VPLunit}
\begin{aligned}
\{\partial_t  +v \cdot \nabla_x  +E \cdot \nabla_v\} F_+ &= Q(F_+ + F_-, F_+),  \\
\{ \partial_t  + v \cdot \nabla_x  - E\cdot \nabla_{\highlightEdit{v}} \}  F_- &=Q(F_++F_-, F_-), \\
 - \Delta_x  \phi &=  4\pi ( n_+ - n_-).
\end{aligned}
\end{equation}
 Above, $\phi$ and $E = -\nabla_x \phi$ are the electric potential and field respectively. The functions $n_\pm(t,x)$ are the charge densities, defined by
\begin{align}
n_+ &= \int_{\mathbf R^3} F_+ dv , \ \ \ n_- =\int_{\mathbf R^3} F_- dv.
\end{align}
 The collision operator is given by
\begin{align}
Q(G_1,G_2) &= \nabla_v \cdot\int_{\mathbf R^3} \Phi(v- v') \left\{G_1(v') \nabla_v G_2(v) - G_2(v) \nabla_{v'} G_1 (v')\right\}dv'.
\end{align}
 \highlightEdit{Above, $\Phi$ is the Landau collision kernel associated with Coulombic interactions, defined as follows:} for each $z \in \mathbf R^3$, 
\begin{equation}
\Phi(z) := \frac{1}{|z|}\left (I - \frac{z\otimes z}{|z|^2}\right).
\end{equation}
We  note that we have set all physical parameters to 1 \highlightEdit{for the sake of simplicity. However, we emphasize that the results of this paper apply to general choices of parameters (see Remark \ref{rem:parameters} below).}
Finally, we chose the spatial domain to be the 3D torus, i.e. $x \in \mathbf T^3 = (\mathbf R /\mathbf Z)^3$ and $v \in \mathbf R^3$.

 In the paper \cite{flynn2024massless}, the Vlasov-Poisson-Landau system \highlightEdit{with massless electrons} was derived from the system \eqref{eq:VPLunit} (albeit, with the electron mass set to a small number, rather than 1).  \highlightEdit{This system} involves the ion distribution function $F_+(t,x,v)$ coupled to the inverse electron temperature $\beta(t)$ (a time-dependent scalar quantity) and the electrostatic field $\phi(t,x)$.
 The system for $(F_+,\beta,\phi)$ reads 
\begin{equation}\label{eq:VPLion''}
\begin{aligned}
&\{\partial_t  +v \cdot \nabla_x  - E \cdot \nabla_v\} F_+ = Q(F_+, F_+)\\
&\frac{d}{dt} \left\{\frac{3}{2\beta} + \iint_{\mathbf T^3 \times \mathbf R^3} \frac{|v|^2}{2} F_+dx dv + \frac{1}{8\pi}  \int_{\mathbf T^3} |E|^2 dx \right\} = 0,\\
&-\Delta_x  \phi = 4\pi (n_+ - e^{\beta  \phi}).
\end{aligned}
\end{equation}
This system has been formally derived in a number of papers prior to this work, namely \cite{bardos_maxwellboltzmann_2018} in the Boltzmann case, or  \cite{degond1996asymptotics,degond1996transport} with Landau collisions. \highlightEdit{There are a number of works on related other kinetic equations with massless electrons, such as the Vlasov-Poisson system with massless electrons {\cite{bouchut1991global,gagnebin2023landau,griffin2021global,han2011quasineutral,huang2022nonlinear}}, and other models \cite{herda2016massless}.}

The main result in \cite{flynn2024massless} is conditioned on the existence of sufficiently regular solutions to the limiting system \eqref{eq:VPLion''}. 
Even for the system \eqref{eq:VPLunit}, it appears that a local well-posedness result for large data is missing from the literature, \highlightEdit{although there are several works which answer a similar questions.} The works \cite{he2014well, henderson2019local, henderson2020local} prove such a result in the case of the inhomogeneous Landau system. 
 Also worth mentioning is the local well-posedness result \cite{chaturvedi2019local} in the case of hard potentials. However, it is not immediately clear that any of these works imply an analogous result for the systems considered here. On the other hand, the work \cite{guo_global_2010} proves global well-posedness of the system \eqref{eq:VPLunit} for perturbations of Maxwellian. \highlightEdit{Very recently, \cite{guillen2023landau} proved that the homogeneous Landau equation enjoys global well-posedness.}

 The goal of this paper is to prove local well-posedness for the systems \eqref{eq:VPLunit} and \eqref{eq:VPLion''} with a periodic spatial domain in $L^2$ based weighted anisotropic Sobolev spaces for large initial data. \highlightEdit{ We emphasize that the solutions of the ion system \eqref{eq:VPLion''} constructed in Theorem \ref{thm:LWP}-(i) below are compatible with the hypothesis of the main theorem in \cite{flynn2024massless}, allowing us to rigorously take the massless electron limit (see Remark \ref{rem:compatibility} for details).}


\subsection{Main Result}
Before we can state our main theorem, we must define a number of function spaces in which the solutions shall be constructed. The framework used here builds on the techniques developed in \cite{flynn2024massless}. 
First we define the diffusion matrix associated with maxwellians: let $\sigma_{ij}(v) = \Phi_{ij} * \mu$, with $i, j \in \{1,2,3\}$. Given $u\in \mathbf R^3$, we have that
\begin{equation}
u^T \sigma(v)u  \sim \frac{1}{\langle v\rangle^3} |P_v u|^2 +  \frac{1}{\langle v\rangle}|P_{v^\perp} u|^2\highlightEdit{.}\end{equation}
Here, $P_v + P_{v^\perp} = I$ denote the orthogonal projections onto $\mathrm{span}\{v\}$ and $\{v\}^\perp$ respectively.
Using \highlightEdit{these matrices}, we define the following spaces which will capture the dissipation produced by the various collision operators.
\highlightEdit{We define the $\dot {\mathcal H}_{\sigma}$ semi-norm: for any appropriate $\psi : \R^3\to \R$, let}
\begin{equation}
\| \psi\|_{\dot{\mathcal H}_\sigma} ^2:= \int_{\mathbf R^3} \sigma_{ij}(v)\partial_{v_i} \psi(v)\partial_{v_j}\psi(v)dv.
\end{equation}
\highlightEdit{
Above and throughout this paper, we sum over repeated indices.}

Fix  parameters $s\in (\frac{5}{2},3]$,  $r \in (0,1]$, and $m_j =5(j+1)$ with $j \in \{0,1,2\}$.  Given $u = u(x,v)$, we define
\begin{align}
\|u\|_{\mathfrak E}^2 &:= \|\langle v\rangle^{m_2} u\|_{L^2_{x,v}}^2 + \|\langle v\rangle^{m_1}\langle \nabla_x\rangle^s  u\|_{L^2_{x,v}}^2 + \|\langle \nabla_v\rangle^r u\|_{L^2_{x,v}}^2, \\
\|u\|_{\mathfrak D}^2 &:= \|\langle v\rangle^{m_2} u\|_{L^2_{x}(\dot{\mathcal H}_\sigma)_v}^2 + \|\langle v\rangle^{m_1}\langle \nabla_x\rangle^s  u\|_{L^2_{x}(\dot{\mathcal H}_\sigma)_v}^2  + \|\langle \nabla_v\rangle^r u\|_{L^2_{x}(\dot{\mathcal H}_\sigma)_v}^2,\\
\|u\|_{\mathfrak E'}^2 &:= \|\langle v\rangle^{m_1} u\|_{L^2_{x,v}}^2 + \|\langle v\rangle^{m_0}\langle \nabla_x\rangle^s  u\|_{L^2_{x,v}}^2, \\
\|u\|_{\mathfrak D'}^2 &:= \|\langle v\rangle^{m_1} u\|_{L^2_{x}(\dot{\mathcal H}_\sigma)_v}^2 + \|\langle v\rangle^{m_0}\langle \nabla_x\rangle^s  u\|_{L^2_{x}(\dot{\mathcal H}_\sigma)_v}^2. 
\end{align}
We also define $\mathfrak E_T = L^\infty([0,T];\mathfrak E)$, $\mathfrak D_T = L^2([0,T];\mathfrak D)$ and similarly for the primed spaces.

We now state the theorem on local well-posedness for the systems \eqref{eq:VPLunit} and \eqref{eq:VPLion''}.
\begin{theorem}\label{thm:LWP} Let $F_{+,in}(x,v), F_{-,in}(x,w) $ be nonnegative functions in $L^1(\mathbf T^3\times \mathbf R^3)$ with unit $L^1$ norm.\highlightEdit{
 \begin{enumerate}
\item[(i)] 
Assume $\beta_{in} > 0$ and 
\begin{align}
 \|F_{+,in}\|_{\mathfrak E} + \|\frac{1}{n_{+,in}}\|_{L^\infty} < \infty
\end{align}
Then, there exists $T^* > 0$, such that there exists a unique solution $(F_+,\beta,\phi)$ to \eqref{eq:VPLion''} with initial data $(F_+,\beta)|_{t=0} = (F_{+,in},\beta_{in})$ on $t \in [0,T^*)$ 
\begin{align}
F_+ &\in C([0,T^*);\mathfrak E') \cap L_{loc}^\infty([0,T^*);\mathfrak E) \cap L^2_{loc}([0,T^*); \mathfrak D), \label{eq:F_+regularity}\\
\beta &\in C([0,T^*); \mathbf R_+), \\
\phi &\in C([0,T^*);H^{s+2}).
\end{align}
Moreover, either $T^* = \infty$ or 
\begin{align}
\lim_{T\uparrow T^*} \|F_+\|_{\mathfrak E_T} + \|\frac{1}{n_+}\|_{L^\infty([0,T]\times\mathbf T^3)} = \infty.
\end{align}
\item[(ii)] 
Assume
\begin{align}
 \|(F_{+,in},F_{-,in})\|_{\mathfrak E} + \|(\frac{1}{n_{+,in}},\frac{1}{n_{-,in}})\|_{L^\infty} < \infty
\end{align}
Then, there exists $T^* > 0$, such that there exists a unique solution $(F_+,F_-)$ to \eqref{eq:VPLunit} with initial data $(F_+,F_-)|_{t=0} = (F_{+,in},F_{-,in})$ on $t \in [0,T^*)$  in the space 
\begin{align}
F_\pm &\in C([0,T^*);\mathfrak E') \cap L^\infty_{loc} ([0,T^*);\mathfrak E) \cap L^2_{loc}([0,T^*); \mathfrak D),
\end{align}
Moreover, either $T^* = \infty$ or 
\begin{align}\label{eq:blowup}
\lim_{T \uparrow T^*} \|(F_+,F_-)\|_{\mathfrak E_T} + \|(\frac{1}{n_+},\frac{1}{n_-})\|_{L^\infty([0,T]\times\mathbf T^3)} = \infty.
\end{align}
\end{enumerate}}
\end{theorem}\highlightEdit{
In the theorem above, by ``solution," we mean a weak solution in the sense of distributional derivatives. 
 More precisely, in the case of \eqref{eq:VPLion''}, for any $\varphi \in C^1_{c}([0,T^*)\times \mathbf T^3 \times \mathbf R^3)$, we have
\begin{align*}
 &\iiint_{[0,T^*)\times \mathbf T^3 \times \mathbf R^3}\{\partial_t  + v\cdot \nabla_x - E\cdot \nabla_v\}\varphi(t,x,v) F_+(t,x,v) dx dv\\
 & = \iiiint_{[0,T^*)\times \mathbf T^3 \times \mathbf R^3\times \mathbf R^3}\partial_{v_i} \varphi(t,x,v)\Phi_{ij}(v- v') \\
&\quad \quad \cdot \left\{F_+(t,x,v') \partial_{v_j} F_+(t,x,v) - F_+(t,x,v) \partial_{v_j'} F_+ (t,x,v')\right\}dtdxdvdv'\\
 & \quad - \iiint_{ \mathbf T^3 \times \mathbf R^3}\varphi(0,x,v) F_{+,in}(x,v) dx dv
\end{align*}
One can check that if \eqref{eq:F_+regularity} holds, the nonlinear collision integral above is well-defined using Lemma \ref{lem:PhiUpperLower}. We define solutions similarly for \eqref{eq:VPLunit}.}

\begin{remark}
\label{rem:compatibility} 
The spaces $\mathfrak E$ differ from the spaces in \cite{flynn2024massless} denoted by the same symbols. Specifically, the $\mathfrak E$ norm here controls a small amount of $v$ regularity through the $\langle \nabla_v\rangle^r$ term.  Such control on velocity derivatives was not present in the previous paper, and requires additional analysis. This additional control is necessary to handle the non-perturbative setting of this paper.
It is interesting and important to note that Sobolev regularity in velocity plays different roles in these two papers: specifically, the uniform estimates in \cite{flynn2024massless} are easier to obtain without estimating velocity derivatives due to the singular nature of the ion-electron collision operator as the electron mass tends to zero. On the other hand, the additional $v$ regularity is more convenient to include for LWP of the limiting system \eqref{eq:VPLion''} in this paper.
Nevertheless, since the norms here are stronger, the LWP theorem proved here implies that one can construct a solution to \eqref{eq:VPLion''} compatible with the assumptions of Theorem 1.1 of \cite{flynn2024massless}. 

\end{remark}

\begin{remark}\label{rem:parameters}
In this paper, we have set the atomic number, electron charge, electron mass, proton mass, Coulomb logarithm, and temperature all to 1, for the sake of simplifying the presentation.
The models \eqref{eq:VPLunit} and \eqref{eq:VPLion''} with correct physical parameters can  be found in \cite{flynn2024massless}. Moreover, in that paper, there is a re-scaling of the system that only depends on three dimensionless parameters, the ratio of the electron over ion mass $\ep^2>0$,  the Knudsen number $\kappa >0$, and the atomic number $Z$.

With no serious alterations to the proof, Theorem \ref{thm:LWP} proves local well-posedness of the systems \eqref{eq:VPLunit} and \eqref{eq:VPLion''} with arbitrary positive choices of physical parameters. However, the maximal time of existence $T^*>0$ may go to zero in certain limiting regimes. For instance, the massless electron limit $\ep \to 0$ of the Vlasov-Poisson-Landau system  is singular, and so it is unlikely that one could show a uniform lower bound of $T^*$ in this limit without proving global well-posedness for large data. In the work \cite{flynn2024massless}, the authors overcome this issue by relying on the proximity of $F_-$ to a quasi-static equilibrium solution. The fluid limit $\kappa \to 0$ for both \eqref{eq:VPLunit} and \eqref{eq:VPLion''} is similarly singular.

On that note, it is interesting to consider what the optimal lower bound on $T^*$ in terms of $\ep$ and $\kappa$. The methods of this paper could produce some (likely suboptimal) quantitative lower bound on $T^*$. On the other hand, with recent progress on the global well-posedness of the homogeneous Landau equation \cite{guillen2023landau}, it is plausible that the systems considered in this paper also have global well-posedness for large data.
\end{remark}

\begin{remark}
The number of moments in $v$ controlled by the $\mathfrak E$ and $\mathfrak D$ norms is far from optimal, and was chosen for the sake of simplicity.  On the other hand, with the method of proof in this paper,  the requirement that $s>\frac{5}{2}$ may be relaxed to $s > \frac{3}{2}$, potentially at the cost of higher $v$ moment control. Our method would  fail below this threshold, as we rely heavily on the Sobolev embedding $H^{s}(\mathbf T^3) \subset L^\infty(\mathbf T^3)$. If we consider $s \in (\frac{3}{2},\frac{5}{2})$, we would need control on additional $v$ moments to bound the commutator $[\langle \nabla_x\rangle^s,v\cdot \nabla_x]F$ via the same interpolation argument given in Step 3 of Proposition \ref{prop:aprioriFG}. Moreover, the amount of moments needed to close this argument diverges as $s \downarrow \frac{3}{2}$.

\end{remark}

\begin{remark}
Using this method, it is straightforward (in fact, easier) to prove a local well-posedness theorem for the inhomogeneous Landau equation,
\begin{align}
(\partial_t + v\cdot \nabla_x)F = Q(F,F),
\end{align}
in the same topology as in Theorem \ref{thm:LWP}. Our result is not a strict improvement on any previous result, due to our choice of domain. Nevertheless, in terms of regularity, the anisotropic $\mathfrak E$ norm used here is weaker than the norms used in \cite{he2014well} (which required 7 $(x,v)$ derivatives in $L^2$) and \cite{henderson2019local} (which required 4 derivatives). The work \cite{henderson2020local} is neither stronger nor weaker than our result, as they prove local well-posedness for data in a weighted $L^\infty$ space.

\end{remark}

\subsection{Method of the proof}

The proof method involves fairly straightforward propagation of the norm $\mathfrak E$ via energy estimates, and commutator estimates, as in Proposition \ref{prop:aprioriFG}. This relies on Lemma 3.2 from \cite{flynn2024massless}, restated as Lemma \ref{lem:PhiUpperLower} below, to control collisions. Then, we control differences of solutions (contraction estimates) as in Proposition \ref{prop:difference}. With these alone, we can construct solutions to the Vlasov-Poisson-Landau system \eqref{eq:VPLunit} as the limit of smooth approximate solutions given by an iteration procedure, given in Section \ref{sec:construction}.

To construct solutions to the system \eqref{eq:VPLion''}, the procedure is mostly the same, except for having to deal with the Poincar\'e-Poisson system that couples $(\beta,\phi)$ to $F_+$. In particular, we need to bound $\beta$ from above and below, and control $\phi \in H^{s+2}$, as well as control the various error terms associated with these unknowns in the iteration process. The culmination of this is Lemma \ref{lem:PPsystemEstimates}, which builds on previous analyses of this system in \cite{bardos_maxwellboltzmann_2018,flynn2024massless},

In attempting to reuse the estimates from \cite{flynn2024massless}, we encountered one main difficulty, foreshadowed in the remark above. In the previous paper, we are able to close estimates without any $v$ derivatives (and indeed, it was necessary). However, this framework was insufficient for general large data. This is because we were unable to close standard bootstrap assumptions without propagating $v$ derivatives, due to the commutators of $x$ derivatives with the diffusion matrix. With these estimates closed, we construct solutions by a standard iteration argument.

\section{\textit{A priori} estimates}
\label{sec:LWP}

\subsection{Lower and upper bounds on the diffusion matrix}
%
%

We briefly restate Lemma 3.2 from \cite{flynn2024massless} below without proof.

\begin{lemma} \label{lem:PhiUpperLower}
Let $G(v) = G :  \mathbf R^3 \to \mathbf R$ be some measurable function, and let $\nu \in \mathbf S^2$. Then for all $v \in \mathbf R^3$,
\begin{align}\label{eq:PhiConvUpper}
| \Phi_{ij} * G(v) \nu_i \nu_j| \lesssim \|\langle v\rangle^{5}G\|_{L^2} \sigma_{ij} (v)\nu_i \nu_j.
\end{align}
Assuming $G \geq 0$,  we have the lower  bound
\begin{align}\label{eq:PhiConvLower}
\frac{\|G\|_{L^1}}{\langle\|\langle v\rangle^{2}G\|_{L^2}/\|G\|_{L^1}\rangle ^{17} } \sigma_{ij}(v) \nu_i \nu_j \lesssim \Phi_{ij} * G(v) \nu_i \nu_j.
\end{align}

\end{lemma}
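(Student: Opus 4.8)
The starting point is the pointwise identity $\Phi_{ij}(z)\nu_i\nu_j = |z|^{-3}|z\times\nu|^2 \ge 0$, together with the two-sided bound $\sigma_{ij}(v)\nu_i\nu_j \sim \langle v\rangle^{-3}|P_v\nu|^2 + \langle v\rangle^{-1}|P_{v^\perp}\nu|^2$ recorded above; in particular $\langle v\rangle^{-3}\lesssim \sigma_{ij}(v)\nu_i\nu_j\lesssim\langle v\rangle^{-1}$.

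For \eqref{eq:PhiConvUpper}: since the kernel is nonnegative, $|\Phi_{ij}*G(v)\nu_i\nu_j| \le \int |z|^{-3}|z\times\nu|^2 |G(v-z)|\,dz$, and Cauchy--Schwarz with the weight $\langle v-z\rangle^{\pm 5}$ reduces the claim to the purely kernel-theoretic estimate $\int |z|^{-6}|z\times\nu|^4\langle v-z\rangle^{-10}\,dz \lesssim (\sigma_{ij}(v)\nu_i\nu_j)^2$. I would prove this by splitting the $z$-integral into the near region $|z|\le\langle v\rangle$, where $\langle v-z\rangle\sim\langle v\rangle$ so that the weight factors out and the surviving kernel is locally integrable over a ball, and the far region $|z|\ge\langle v\rangle$, handled by a dyadic decomposition in $|v-z|$. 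In both regions the anisotropy — i.e.\ the gain of $\langle v\rangle^{-3}$ rather than $\langle v\rangle^{-1}$ when $\nu$ is nearly parallel to $v$ — is extracted by writing $\nu = P_v\nu + P_{v^\perp}\nu$, using $|z\times\nu|^2 \le 2|z\times P_v\nu|^2 + 2|z|^2|P_{v^\perp}\nu|^2$, and noting that $z\times P_v\nu$ is small when $z$ points nearly along $v$.

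For \eqref{eq:PhiConvLower}, set $M = \|G\|_{L^1}$, $A = \|\langle v\rangle^2 G\|_{L^2}$, $K = A/M$; we may assume $K\ge 1$. By Cauchy--Schwarz, $\int_{|v'|>R}G \le A(C/R)^{1/2}$ and $\int_{B}G \le A|B|^{1/2}$ for any measurable $B$ (using $\langle v'\rangle\ge 1$), so choosing $R\sim K^2$ gives $\int_{|v'|\le R}G\ge\tfrac12 M$, and deleting from the ball $\{|v'|\le R\}$ the tube of radius $\rho\sim K^{-3/2}$ about the line $\ell_v = \{v+t\nu:t\in\mathbf{R}\}$ together with $B(v,\rho)$ discards a set of volume $\lesssim K^{-2}$, hence of $G$-mass $\le\tfrac14 M$. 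On the remaining good set $S$ one has by construction $|v-v'|\ge\rho$ and $|(v-v')\times\nu| = \mathrm{dist}(v',\ell_v)\ge\rho$, so $\Phi_{ij}(v-v')\nu_i\nu_j \gtrsim \rho^2/|v-v'|^3$, and $\Phi_{ij}*G(v)\nu_i\nu_j \ge \int_S \Phi_{ij}(v-v')\nu_i\nu_j\, G\,dv' \ge \tfrac14 M\inf_{v'\in S}\Phi_{ij}(v-v')\nu_i\nu_j$; it then remains to compare this infimum with $\sigma_{ij}(v)\nu_i\nu_j$. For $\langle v\rangle\lesssim\langle K\rangle^4$ the crude bound $|v-v'|\le\langle v\rangle + R\lesssim\langle v\rangle\langle K\rangle^2$ suffices. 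For $\langle v\rangle\gtrsim\langle K\rangle^4$ one has $|v-v'|\sim\langle v\rangle$ for all $v'\in B_R$, and one separates the case $|v||P_{v^\perp}\nu|\ge 2R$ — where the perpendicular part $\langle v\rangle^{-1}|P_{v^\perp}\nu|^2$ of $\sigma\nu\nu$ is recovered without removing the tube — from the case $|v||P_{v^\perp}\nu|<2R$, where $\nu$ is nearly parallel to $v$, $\sigma\nu\nu\sim\langle v\rangle^{-3}$, and the tube removal is essential. Tracking powers of $\langle K\rangle$ through these cases yields the exponent in \eqref{eq:PhiConvLower}.

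The main obstacle is the lower bound, and within it the near-parallel case $|v||P_{v^\perp}\nu|\ll R$: there the comparison kernel $|z\times\nu|^2/|z|^3$ degenerates on the whole line $\ell_v$, which may pass through the region carrying the bulk of the mass of $G$, so one genuinely needs the quantitative thin-tube removal and must verify that the volume budget $\sim K^{-2}$ for the discarded set — which is exactly what the $L^2$-in-$\langle v\rangle^2 G$ hypothesis affords — is compatible with a lower bound of the advertised order $\langle K\rangle^{-17}\langle v\rangle^{-3}$. The upper bound is more routine; its only subtlety is to organize the $z$-integral so that the anisotropic weights reproduce those in $\sigma_{ij}(v)\nu_i\nu_j$ rather than merely the isotropic bound $\langle v\rangle^{-1}$.
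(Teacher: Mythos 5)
Your strategy for both bounds is the standard one (note the present paper does not reprove this lemma; it is quoted from \cite{flynn2023massless}), and the upper bound is essentially fine: the Cauchy--Schwarz reduction to $\int |z|^{-6}|z\times\nu|^4\langle v-z\rangle^{-10}\,dz \lesssim (\sigma_{ij}(v)\nu_i\nu_j)^2$ is correct and this kernel estimate is true. The only slip there is the region definition: on $|z|\le\langle v\rangle$ one does \emph{not} have $\langle v-z\rangle\sim\langle v\rangle$ (take $z$ near $v$); split instead at $|z|\le\langle v\rangle/2$, or split according to $|v-z|$ versus $|v|$, after which your anisotropic decomposition $\nu=P_v\nu+P_{v^\perp}\nu$ does the job.

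The genuine gap is in the mass budget for the lower bound. With $R\sim K^2$ and tube radius $\rho\sim K^{-3/2}$, the volume of the tube intersected with $\{|v'|\le R\}$ is of order $\rho^2 R\sim K^{-1}$, not $K^{-2}$ as you assert; and even granting a volume of $K^{-2}$, your own estimate $\int_B G\le A|B|^{1/2}$ only yields removed mass $\le A K^{-1}=M$, not $\le \tfrac14 M$ (with the correct volume it yields $A K^{-1/2}=MK^{1/2}$). So nothing in the argument as written prevents the deleted tube from carrying essentially all of the mass of $G$, and the inequality $\int_S G\ge \tfrac14 M$, on which the entire lower bound rests, is unproven. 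Two easy repairs: (a) take $\rho=cK^{-2}$ with $c$ a small absolute constant, so the removed mass is $\le A(\rho^2R)^{1/2}\lesssim cM\le\tfrac14M$; rerunning your cases then works provided you replace the wasteful bound $|v-v'|\lesssim\langle v\rangle\langle K\rangle^2$ by $|v-v'|\lesssim\max(\langle v\rangle,R)$ (otherwise the exponent creeps to $18$), and one lands comfortably within the claimed exponent $17$; or (b) keep $\rho\sim K^{-3/2}$ but bound the tube mass using the decay of the weight along the tube axis, $\int_{T_\rho}\langle v'\rangle^{-4}dv'\lesssim\rho^2$ uniformly in the line, so the removed mass is $\lesssim A\rho\sim MK^{-1/2}\le\tfrac14M$ for $K$ large (for $K\lesssim 1$ one has $\langle K\rangle\sim 1$ and a small constant tube radius suffices); this choice preserves your numerology, with the binding case $\langle v\rangle\sim\langle K\rangle^4$, $\sigma_{ij}\nu_i\nu_j\sim\langle v\rangle^{-1}$ giving exactly the exponent $17$. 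The remaining ingredients of your argument --- the pointwise bound $\Phi_{ij}(v-v')\nu_i\nu_j=\mathrm{dist}(v',\ell_v)^2/|v-v'|^3$, the split at a power of $\langle K\rangle$ in $\langle v\rangle$, and the dichotomy $|v||P_{v^\perp}\nu|\gtrless 2R$ --- are correct and do close once the mass budget is fixed.
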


\subsection{Energy estimates}

As done in \cite{flynn2024massless}, we split the collision operator into ``diffusion" (second order) and ``transport" (first order) parts in divergence form:
\begin{align}
Q(G_1,G_2)&= Q_D(G_1,G_2) + Q_T(G_1,G_2)\\
Q_D(G_1,G_2)&:= \partial_{v_j} ((\Phi_{ij}* G_1) \partial_{v_i} G_2)\\
Q_T(G_1,G_2)&:=-  \partial_{v_j}(\partial_{v_i}(\Phi_{ij}* G_1) G_2)
\end{align}
Here, in the case where $G_1$ depends on $t$ or $x$, we abuse notation slightly and write 
\begin{align}
\Phi_{ij}*G_1(t,x,v) = \int_{\mathbf R^3} \Phi_{ij}(v-v')G_1(t,x,v')dv'. 
\end{align}

We now prove a priori estimates for the following system:
\begin{align}\label{eq:FG}
\partial_t F + \{v\cdot \nabla_x -\nabla_x \psi \cdot \nabla_v\} F = Q(G,F).
\end{align}

\begin{proposition}\label{prop:aprioriFG}
Let $M > 0$. 
Suppose $F$ is a weak solution to \highlightEdit{\eqref{eq:FG}} on the interval $t \in [0,T]$. Assume the following on $G = G(t,x,v) \geq 0$:
\begin{align}\label{eq:bootstrapG}
\sup_{t \in [0,T]} \left(\|\frac{1}{\int_{\mathbf R^3} G dv}\|_{L^\infty_x}+\|G\|_{\mathfrak E} + \| \psi\|_{H^{s+1}_x}\right) \leq M <\infty
\end{align}
Then,
\begin{align}\label{eq:apriori}
 \|F\|_{\mathfrak E_T\cap \mathfrak D_T} \leq e^{C_M (T + \sqrt{T}\|G\|_{\mathfrak D_T}^\frac{3}{2})} \|F_{in}\|_{\mathfrak E}.
\end{align}

%
\end{proposition}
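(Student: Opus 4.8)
The plan is to prove \eqref{eq:apriori} by running three coupled energy estimates, one for each building block of the norm $\|F\|_{\mathfrak E}$ --- the weighted $L^2$ piece $\langle v\rangle^{m_2}F$, the high spatial regularity piece $\langle v\rangle^{m_1}\langle\nabla_x\rangle^s F$, and the velocity regularity piece $\langle\nabla_v\rangle^r F$ --- and then combining them with Gr\"onwall's inequality. Since $F$ is only a weak solution, all of these computations are first performed on a regularization of \eqref{eq:FG} (mollifying in $x$ and $v$, or working directly with the approximants of Section \ref{sec:construction}) and then passed to the limit; I suppress this throughout. The preliminary observation, used everywhere, is that the bootstrap hypothesis \eqref{eq:bootstrapG} together with the Sobolev embedding $H^s(\mathbf{T}^3)\subset L^\infty(\mathbf{T}^3)$ (valid since $s>\tfrac32$) gives, uniformly in $(t,x)$, the pointwise bounds $\|\langle v\rangle^{m_1}G(t,x,\cdot)\|_{L^2_v}\lesssim M$ and $\int_{\mathbf{R}^3}G(t,x,\cdot)\,dv\geq M^{-1}$; feeding these into Lemma \ref{lem:PhiUpperLower} yields both the uniform upper bound $|\Phi_{ij}*G\,\nu_i\nu_j|\lesssim M\,\sigma_{ij}\nu_i\nu_j$ and, crucially, a uniform \emph{coercive} lower bound $\Phi_{ij}*G\,\nu_i\nu_j\gtrsim c_M\,\sigma_{ij}\nu_i\nu_j$, with $c_M>0$ a (polynomial) function of $M$.

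For each of the three pieces I pair \eqref{eq:FG} with the corresponding weighted/differentiated multiple of $F$ and integrate over $\mathbf{T}^3\times\mathbf{R}^3$. The transport term $v\cdot\nabla_x$ commutes with multiplication by the $x$-independent weights $\langle v\rangle^{m}$ and with $\langle\nabla_x\rangle^s$, so it contributes nothing to the first two estimates; in the third it produces $[\langle\nabla_v\rangle^r,v\cdot\nabla_x]F$, which is a Fourier multiplier of order $r-1$ in $v$ applied to $\nabla_xF$, hence bounded by $\|\nabla_xF\|_{L^2_{x,v}}\leq\|F\|_{\mathfrak E}$ (using $s\geq1$). The force term $-\nabla_x\psi\cdot\nabla_vF=-\nabla_v\cdot(\nabla_x\psi\,F)$: integrating by parts in $v$, in the first estimate it costs $\lesssim\|\nabla_x\psi\|_{L^\infty_x}\|\langle v\rangle^{m_2}F\|_{L^2}^2\lesssim M\|F\|_{\mathfrak E}^2$; in the third it is a perfect $v$-divergence (since $\nabla_x\psi$ is $v$-independent) and vanishes; in the second one also commutes $\langle\nabla_x\rangle^s$ past $\nabla_x\psi$, with the Kato--Ponce commutator $[\langle\nabla_x\rangle^s,\nabla_x\psi]$ controlled by $\|\psi\|_{H^{s+1}_x}$ (this is where $s>\tfrac52$ is needed, to put $\nabla^2\psi\in L^\infty_x$). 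For the collision operator $Q(G,F)=Q_D(G,F)+Q_T(G,F)$, integrating by parts in $v$ on $Q_D$ produces the principal term $-\int(\Phi_{ij}*G)\,\partial_{v_i}(\text{wt}\,F)\,\partial_{v_j}(\text{wt}\,F)$, which by the coercive lower bound dominates $c_M$ times the relevant piece of $\|F\|_{\mathfrak D}^2$; the weight commutators, the $Q_T$ term (using the pointwise bound on $\partial_{v_i}(\Phi_{ij}*G)$), and the $\langle\nabla_v\rangle^r$-commutators with $\Phi_{ij}*G$ are all handled by Cauchy--Schwarz in the $\sigma$-quadratic form, absorbing a small multiple of $\|F\|_{\mathfrak D}^2$ and leaving $C_M\|F\|_{\mathfrak E}^2$ --- the spare velocity moments (both $m_2$ and $m_1$ exceed the $5$ in Lemma \ref{lem:PhiUpperLower}) providing the room.

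The one genuinely delicate point --- and the reason the $\langle\nabla_v\rangle^r$ norm must be carried in $\mathfrak E$ --- is the commutator contribution $[\langle\nabla_x\rangle^s,\Phi_{ij}*G]\,\partial_{v_i}F$ (and its $Q_T$ analogue) in the high spatial regularity estimate. By the fractional Leibniz rule, the $x$-derivatives falling on $\Phi_{ij}*G=\Phi_{ij}*(\langle\nabla_x\rangle^{-k}\langle\nabla_x\rangle^kG)$ are converted, via the upper bound of Lemma \ref{lem:PhiUpperLower} applied to $\langle\nabla_x\rangle^kG$, into weighted $L^2_v$-norms of $\langle\nabla_x\rangle^kG$ times $\sigma$: for $k<s$ these are absorbed into $\|G\|_{\mathfrak E}\leq M$ via Sobolev embedding and spare moments, while the endpoint $k=s$ must be charged to the \emph{dissipative} norm $\|G\|_{\mathfrak D}$, which is exactly the source of the factor of $\|G\|_{\mathfrak D}$ (to a fractional power) in \eqref{eq:apriori}. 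The remaining cofactor $\partial_{v_i}F$, still carrying some $x$-derivatives, must then be estimated at an intermediate $(x,v)$-regularity lying strictly between $\langle v\rangle^{m_1}\langle\nabla_x\rangle^sF$ and $\langle\nabla_v\rangle^rF$; this is done by an interpolation inequality among the three components of $\mathfrak E$ and the dissipation $\|F\|_{\mathfrak D}$ (so that a bit of $v$-smoothing is available alongside full $x$-regularity), after which Cauchy--Schwarz in the $\sigma$-form plus Young's inequality leave a term of the schematic form $C_M\,\|G\|_{\mathfrak D}^{3/2}\|F\|_{\mathfrak E}^2$ together with an absorbable $\epsilon\|F\|_{\mathfrak D}^2$, the exponent $\tfrac32$ being fixed by the interpolation weights. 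I expect this commutator/interpolation bookkeeping --- and in particular checking that $r\in(0,1]$ of velocity regularity together with $\mathfrak D$ really does control the intermediate norm that appears --- to be the main technical obstacle; everything else is routine.

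Summing the three estimates and absorbing all small multiples of $\|F\|_{\mathfrak D}^2$ into the coercive terms gives a differential inequality
\[
\frac{d}{dt}\|F(t)\|_{\mathfrak E}^2+c_M\|F(t)\|_{\mathfrak D}^2\leq C_M\bigl(1+\|G(t)\|_{\mathfrak D}^{3/2}\bigr)\|F(t)\|_{\mathfrak E}^2 .
\]
Dropping the dissipative term and applying Gr\"onwall gives $\|F(t)\|_{\mathfrak E}^2\leq\|F_{in}\|_{\mathfrak E}^2\exp\!\bigl(C_M\int_0^t(1+\|G\|_{\mathfrak D}^{3/2})\bigr)$, and since $\int_0^T\|G\|_{\mathfrak D}^{3/2}\,dt\lesssim\sqrt T\,\|G\|_{\mathfrak D_T}^{3/2}$ by H\"older's inequality in $t$ (together with $\|G\|_{\mathfrak E_T}\leq M$ where needed), this is the claimed bound on $\|F\|_{\mathfrak E_T}$; finally, integrating the same differential inequality over $[0,T]$ and inserting the bound just obtained controls $\int_0^T\|F(t)\|_{\mathfrak D}^2\,dt$, i.e. $\|F\|_{\mathfrak D_T}$, by the same right-hand side, completing the proof of \eqref{eq:apriori}.
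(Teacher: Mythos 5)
Your overall architecture is the paper's: three energy estimates (for $\langle v\rangle^{m_2}F$, $\langle v\rangle^{m_1}\langle\nabla_x\rangle^s F$, and $\langle\nabla_v\rangle^r F$), coercivity of the $Q_D$ part via Lemma \ref{lem:PhiUpperLower} under \eqref{eq:bootstrapG}, commutator-plus-interpolation control of the errors with absorption into $\|F\|_{\mathfrak D}^2$, and Gr\"onwall with H\"older in time. However, one concrete step fails as written. In your first (weighted $L^2$) estimate you dispose of the $Q_T$ contribution ``using the pointwise bound on $\partial_{v_i}(\Phi_{ij}*G)$'' and claim it leaves only $C_M\|F\|_{\mathfrak E}^2$. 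No such pointwise bound follows from \eqref{eq:bootstrapG}: $|\nabla_v\Phi(z)|\sim|z|^{-2}$ and $|z|^{-2}\notin L^2_{loc}(\mathbf R^3)$, so an $L^\infty_v$ bound on $\nabla_v\Phi*G$ cannot be extracted from weighted $L^2_v$ control of $G$ alone; one needs either $L^p_v$ with $p>3/2$ or a fractional amount of $v$-regularity of $G$. That is exactly what the paper's estimate \eqref{eq:gradPhiBd} supplies, by interpolation $\|\langle v\rangle^2\nabla_v\Phi*G\|_{L^\infty_{x,v}}\lesssim\|G\|_{\mathfrak E}+\|G\|_{\mathfrak E}^{1/4}\|G\|_{\mathfrak D}^{3/4}$, borrowing $H^{3/4}_v$ regularity from the dissipation norm of $G$; after Young's inequality this is the actual source of the $\langle\|G\|_{\mathfrak D}\rangle^{3/2}$ factor, which already appears in the zero-$x$-derivative estimate \eqref{eq:vWeightEnergy} and again in the bound for \eqref{eq:aprioriT3}.

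Relatedly, your ``genuinely delicate point'' is misdiagnosed: the endpoint of the fractional Leibniz rule in $[\langle\nabla_x\rangle^s,\Phi_{ij}*G]\partial_{v_i}F$ need not be charged to $\|G\|_{\mathfrak D}$. In the paper (term \eqref{eq:aprioriT2T2}) it is charged to $\|G\|_{\mathfrak E}\leq M$, with the loss moved onto the $F$-factors, which are interpolated between $\|F^{(m_1,0,0)}\|_{L^2_x(\dot{\mathcal H}_\sigma)_v}$ and $\|F^{(m_1,s,0)}\|_{L^2_x(\dot{\mathcal H}_\sigma)_v}$ and then absorbed via the $r$-regularity interpolation \eqref{eq:gradv_RegInterp}; this is where the moment hierarchy $m_2\geq m_1+\tfrac32 s$ and the $\langle\nabla_v\rangle^r$ component are really used --- including for the force commutator $[\langle v\rangle^{m_1}\langle\nabla_x\rangle^s,\nabla_x\psi\cdot\nabla_v]F$, whose $\nabla_vF$ factor is not controlled by $\mathfrak E$ alone (Kato--Ponce on $\psi$ is not the issue there). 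Since your final differential inequality allows the $\|G\|_{\mathfrak D}^{3/2}$ budget anyway, these are repairable missteps rather than a fatal flaw, but as written the $m_2$-step does not close and the bookkeeping producing the exponent $\tfrac32$ is not the one you describe. (Also, H\"older gives $T^{1/4}\|G\|_{\mathfrak D_T}^{3/2}$ rather than $\sqrt T\,\|G\|_{\mathfrak D_T}^{3/2}$, a harmless discrepancy shared with the paper's statement.)
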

\begin{proof} 
For every $m',s',r'$, we denote \highlightEdit{$F^{(m',s',r')} = \langle v\rangle^{m'} \langle \nabla_x\rangle^{s'} \langle \nabla_v\rangle^{r'} F$}, and similarly for $G^{(m',s',r')}$. 
\\

\noindent \textbf{Step 1:} We prove \highlightEdit{for all $\kappa >0$,}
\begin{align}\label{eq:vWeightEnergy}
\frac{d}{dt}\| F^{(m_2,0,0)}\|_{\highlightEdit{L^2_{x,v}}}^2 + \frac{1}{C} \|F^{(m_2,0,0)}\|_{\highlightEdit{L^2_{x}(\dot{\mathcal H}_\sigma)_v}}^2 \leq   C_\kappa \langle \|G\|_{\mathfrak D}\rangle^{\frac{3}{2}} \|F\|_{\mathfrak E}^2 + \kappa \|F\|_{\mathfrak D}^2.
\end{align}
Observe that $F^{(m_2,0,0)}$ this satisfies 
\begin{align}
&(\partial_t  + v\cdot \nabla_x - \nabla_x \psi  \cdot \nabla_v )F^{(m_2,0,0)}- [\langle v\rangle^{m_2}, \nabla_x \psi \cdot \nabla_v] F =\langle v\rangle^{m_2} Q(G,F).
\end{align}
Thus,
\begin{align}
\frac{1}{2} \frac{d}{dt}\|F^{(m_2,0,0)}\|_{\highlightEdit{L^2_{x,v}}}^2 & =\langle [\langle v\rangle^{m_2}, \nabla_x \psi \cdot \nabla_v] F,F^{(m_2,0,0)}\rangle_{L^2_{x,v}} \label{eq:apriori_m2T1} \\
&\quad +\langle \langle v\rangle^{m_2}Q_D(G,F),F^{(m_2,0,0)}\rangle_{L^2_{x,v}}\label{eq:apriori_m2T2}\\
&\quad +\langle \langle v\rangle^{m_2} Q_T(G,F),F^{(m_2,0,0)}\rangle_{L^2_{x,v}}.\label{eq:apriori_m2T3}
\end{align}
First, we see that
\begin{align}
\eqref{eq:apriori_m2T1} & =-m_2\langle \nabla_x \psi\frac{v}{\langle v\rangle^2}F^{(m_2,0,0)},F^{(m_2,0,0)}\rangle_{L^2_{x,v}}\\
& \lesssim \|\nabla_x \psi\|_{L^\infty_x} \|F^{(m_2,0,0)}\|_{L^2_{x,v}}^2\\
&\lesssim \|\psi\|_{H^s_x} \|F^{(m_2,0,0)}\|_{L^2_{x,v}}^2 \\
&\lesssim  \|F^{(m_2,0,0)}\|_{L^2_{x,v}}^2 .
\end{align}
Next,
\begin{align}
\langle v\rangle^{m_2} \partial_{v_j}( \Phi_{ij} * G \partial_{v_i} F)
&= \partial_{v_j}( \Phi_{ij} * G \partial_{v_i} F^{(m_2,0,0)}\\
&\quad  - m_2 \frac{v_j}{\langle v\rangle^2} \Phi_{ij} * G \partial_{v_i} F^{(m_2,0,0)}\\
&\quad   - m_2\partial_{v_j}( \frac{v_i}{\langle v\rangle^2} \Phi_{ij} * GF^{(m_2,0,0)}) \\
&\quad + m_2(m_2-2)\frac{v_iv_j}{\langle v\rangle^{4}} \Phi_{ij} * G  F.
\end{align}
Thus,
\begin{align}
\eqref{eq:apriori_m2T2} &\leq - \langle \Phi_{ij}* G \partial_{v_i} F^{(m_2,0,0)},\partial_{v_j} F^{(m_2,0,0)}\rangle_{L^2_{x,v}} \\
&\quad +C\|\langle v\rangle^{\frac{1}{2}}\Phi_{ij}*G\|_{L^\infty_{x,v}} \|F^{(m_2,0,0)}\|_{L^2_{x,v}}\|F^{(m_2,0,0)}\|_{L^2_x(\dot{\mathcal H}_\sigma)_v}\\
& \quad   +C\|\Phi_{ij}*G\|_{L^\infty_{x,v}} \|F^{(m_2,0,0)}\|_{L^2_{x,v}}^2
\end{align}
Using Lemma \ref{lem:PhiUpperLower},
\begin{align}
 \langle \Phi_{ij}* G \partial_{v_i} F^{(m_2,0,0)},\partial_{v_j} F^{(m_2,0,0)}\rangle_{L^2_{x,v}} \gtrsim  \|F^{(m_2,0,0)}\|_{L^2_x(\dot{\mathcal H}_\sigma)_v}^2, 
 \end{align}
 and
 \begin{align}
 \|\langle v\rangle^{\frac{1}{2}}\Phi_{ij}*G\|_{L^\infty_{x,v}} \lesssim \|\langle v\rangle^5 G\|_{L^\infty_x L^2_v} \lesssim \|G^{(m_1,s,0)}\|_{L^2_{x,v}}.
 \end{align}
 Hence,
\begin{align}
\eqref{eq:apriori_m2T2} &\leq - \frac{1}{C} \|F^{(m_2,0,0)}\|_{L^2_x(\dot{\mathcal H}_\sigma)_v}^2+ C \|F^{(m_2,0,0)}\|_{L^2_{x,v}}^2.
\end{align}
Next, we have
\begin{align}
\eqref{eq:apriori_m2T3}& = \langle \partial_{v_i} \Phi_{ij} * G F^{(m_2,0,0)},  \partial_{v_j} F^{(m_2,0,0)}\rangle_{L^2_{x,v}} \\
&\quad +  m\langle \frac{v}{\langle v\rangle^2} \partial_{v_i} \Phi_{ij} * G F^{(m_2,0,0)},  F^{(m_2,0,0)}\rangle_{L^2_{x,v}} \\
&=\|\langle v\rangle^\frac{3}{2} \nabla_v \Phi* G\|_{L^\infty_{x,v}}\| F^{(m_2,0,0)}\|_{L^2_{x}(L^2\cap \dot{\mathcal H}_\sigma)_v}  \| F^{(m_2,0,0)}\|_{L^2_{x,v}}.
\end{align}
Now, we claim that
\begin{align}\label{eq:gradPhiBd}
\|\langle v\rangle^2\nabla_v \Phi* G\|_{L^\infty_{x,v}} \lesssim \|G\|_{\mathfrak E} + \|G\|_{\mathfrak E}^{\frac{1}{4}} \|G\|_{\mathfrak D}^{\frac{3}{4}}.
\end{align}
To see this, first observe that $|\nabla_v \Phi| \lesssim \frac{1}{|v|^2}$. Thus, we have for all $(t,x,v)$,
\begin{align}
|\langle v\rangle^2\partial_{v_i} \Phi_{ij}* G (t,x,v)|& \leq \langle v\rangle^2 \int_{|v| > 2 \langle v'\rangle} \frac{1}{|v-v'|^2} |G(t,x,v')|dv' \\
&\quad  +\langle v\rangle^2  \int_{|v| \leq 2 \langle v'\rangle} \frac{1}{|v-v'|^2} |G(t,x,v')|dv' \\
&\lesssim \int_{\mathbf R^3} G(t,x,v') dv' + |\nabla_v |^{-1} \{\langle v\rangle^2 | G|\}(t,x,v)
\end{align}
Hence,
\begin{align}
\|\langle v\rangle^2\partial_{v_i} \Phi_{ij}* G\|_{L^\infty_{x,v}} &\lesssim \|G\|_{L^\infty_xL^1_v} + \||\nabla_v |^{-1} \{\langle v\rangle^2 | G|\}\|_{L^\infty_{x,v}} \\
&\lesssim \|G^{(2,s,0)}\|_{L^2_{x,v}} +\| \ |G^{(2,0,0)}|\ \|_{L^\infty_x H^{3/4}_v}\\
&\lesssim \|G^{(m_1,s,0)}\|_{L^2_{x,v}} + \|G^{(2,0,0)}\|_{L^\infty_x L^2_{v}}^{\frac{1}{4}} \|G^{(2,0,0)}\|_{L^\infty_x H^1_v}^{\frac{3}{4}}\\
&\lesssim  \|G\|_{\mathfrak E}  +  \|G\|_{\mathfrak E}^{\frac{1}{4}} \|G\|_{\mathfrak D}^{\frac{3}{4}}.
\end{align}
In the above, we used the fact that $\|  \  |u| \ \|_{H^1} =\|u\|_{H^1}$. Combining these estimates on \eqref{eq:apriori_m2T1},  \eqref{eq:apriori_m2T2}, and  \eqref{eq:apriori_m2T3}, we get \eqref{eq:vWeightEnergy}.\\

\noindent \textbf{Step 2:}
We now prove  \highlightEdit{for all $\kappa >0$,}
\begin{align}\label{eq:vRegEnergy}
\frac{d}{dt}\|F^{(0,0,r)}\|_{L^2{x,v}}^2 + \frac{1}{C} \|F^{(0,0,r)}\|_{L^2{x}(\mathcal H_\sigma)_v}^2 \leq   C_\kappa \langle \|G\|_{\mathfrak D}\rangle^{\frac{3}{2}} \|F\|_{\mathfrak E}^2 + \kappa \|F\|_{\mathfrak D}^2.
\end{align}
First, we compute
\begin{align}
\frac{1}{2} \frac{d}{dt}\|F^{(0,0,r)}\|_{L^2{x,v}}^2 & =\langle [\langle\nabla_v\rangle^r,v\cdot \nabla_x] F,F^{(0,0,r)}\rangle_{L^2_{x,v}} \label{eq:apriori_rT1} \\
&\quad +\langle \langle \nabla_v\rangle^{r}Q_D(G,F),F^{(0,0,r)}\rangle_{L^2_{x,v}}\label{eq:apriori_rT2}\\
&\quad +\langle \langle \nabla_v\rangle^{r}Q_T(G,F),F^{(0,0,r)}\rangle_{L^2_{x,v}}.\label{eq:apriori_rT3}.
\end{align}
We now bound \eqref{eq:apriori_rT1}.
\begin{align}
[\langle\nabla_v\rangle^r,v\cdot \nabla_x] F =- r\langle \nabla_v\rangle^{r-2}\nabla_v\cdot \nabla_x F.
\end{align}
Hence,
\begin{align}
\eqref{eq:apriori_rT1} \lesssim \|\langle \nabla_v\rangle^{r-1} \nabla_x F \|_{L^2_{x,v}} \|F^{(0,0,r)}\|_{L^2_{x,v}}\lesssim  \|F\|_{\mathfrak E}^2.
\end{align}
To bound the remaining terms, we shall prove the following weighted commutator estimate:
Given appropriate $u_1(v), u_2(v)$, and $m \geq 0$, we have
\begin{align}\label{eq:weightedComm}
\|\langle v\rangle^m [\langle \nabla_v\rangle^r,u_1]u_2\|_{L^2_v} &\lesssim \|\langle \nabla_v\rangle^r u_1\|_{ (\mathcal F {L}^1)_v} \|{\langle v\rangle^m} u_2\|_{L^2_v} \\
&\lesssim \|\langle \nabla_v\rangle^{r+\frac{3}{2}} u_1\|_{L^2_v} \|\highlightEdit{\langle v\rangle^m} u_2\|_{L^2_v},
\end{align}
where $\mathcal F L^1$ is the space of functions whose Fourier transform is  in $L^1$. As usual, we define the Fourier transform as
\begin{align}
\hat u(\xi) = \mathcal F\{u\}(\xi)= \int_{\mathbf R^d} e^{-i v\cdot \xi} u(v)dv.
\end{align}
 By interpolation, it suffices to show \eqref{eq:weightedComm} when $m \in 2\mathbf N_0$. We take the Fourier transform,
\begin{align}
&\mathcal F\{ \langle v\rangle^m [\langle \nabla_v\rangle^r,u_1]u_2\}(\xi)\\
 &= (1- \Delta_\xi)^{\frac{m}{2}} \int (\langle \xi \rangle^r - \langle \xi - \eta\rangle^r) \hat u_1 (\eta) \hat u_2(\xi - \eta) d\eta \\
&=   \int (\langle \xi \rangle^r - \langle \xi - \eta\rangle^r) \hat u_1 (\eta) (1- \Delta_\xi)^{\frac{m}{2}}\hat u_2(\xi - \eta) d\eta \\
&\quad + \sum_{|\alpha + \alpha' | =\frac{m}{2}, \alpha \neq 0} c_{\alpha,\alpha'} \int \partial_\xi^\alpha (\langle \xi \rangle^r - \langle \xi - \eta\rangle^r) \hat u_1 (\eta) \partial_\xi^{\alpha'} \hat u_2(\xi - \eta)d\eta 
\end{align}
where $c_{\alpha,\alpha'}$ are some collection of constants.
Next, $|\langle \xi\rangle - \langle \xi - \eta\rangle^r | \leq \langle \eta \rangle^r$ (recall $r \in (0,1]$). Moreover, $|\partial^\alpha_\xi  (\langle \xi \rangle^r - \langle \xi - \eta\rangle^r)| \lesssim 1$ for all multi-indices $\alpha \neq 0$. Thus, pointwise in $\xi$, we have
\begin{align}
|\mathcal F\{ \langle v\rangle^m [\langle \nabla_v\rangle^r,u_1]u_2\}(\xi)| &\lesssim  \langle \eta \rangle^r|\hat u_1 (\eta) (1- \Delta_\xi)^{\frac{m}{2}}\hat u_2(\xi - \eta) |d\eta  \\
&\quad  +\sum_{|\alpha + \alpha' | =\frac{m}{2}, \alpha \neq 0}  \int | \hat u_1 (\eta) \partial_\xi^{\alpha'} \hat u_2(\xi - \eta)|d\eta 
\end{align}
The estimate \eqref{eq:weightedComm} directly follows from Young's inequality, and Plancherel's identity.

We now apply this estimate. First, split
\begin{align}
\langle \nabla_v\rangle^{r}Q_D(G,F) =  Q_D(G,F^{(0,0,r)}) + \nabla_v \cdot ([\langle \nabla_v\rangle^r, \Phi*G] \nabla_v F).
\end{align}
Therefore,
\begin{align}
\eqref{eq:apriori_rT2} \leq - \frac{1}{C} \|F^{(0,0,r)}\|_{L^2_x(\mathcal H_\sigma)_v}^2  + \langle [\langle \nabla_v\rangle^r, \Phi*G] \nabla_v F, \nabla_v F^{(0,0,r)}\rangle_{L^2_{x,v}}.
\end{align}
The latter term is bounded as follows,
\begin{align}
& \langle [\langle \nabla_v\rangle^r, \Phi*G] \nabla_v F, \nabla_v F^{(0,0,r)}\rangle_{L^2_{x,v}} & \\
 &\quad \lesssim \|\langle v\rangle^{\frac{3}{2}}[\langle \nabla_v\rangle^r, \Phi*G] \nabla_v F\|_{L^2_{x,v}} \|\langle v\rangle^{-\frac{3}{2}} \nabla_v F^{(0,0,r)}\|_{L^2_{x,v}} \\
 &\quad \lesssim  \|\Phi*G\|_{L^2_x\mathcal FL^1} \|\langle v\rangle^{\frac{3}{2}}\nabla_v F\|_{L^2_{x,v}} (\| F^{(0,0,r)}\|_{L^2_{x}(\dot{\mathcal H}_\sigma)_v} + \|F\|_{\mathfrak E}),
\end{align}
having applied \eqref{eq:weightedComm} in the last line.
Now, using $\hat G(t,x,\xi)$ to denote the Fourier transform of $G$ in $v$, we have
\begin{align}
 \|\Phi*G\|_{L^2_x(\mathcal FL^1)_v}& \lesssim  \|\int\frac{1}{ |\xi|^2} |\hat G(t,x,\xi) | d\xi\|_{L^2_x} \\
 &\leq  \|\int_{B(0,1)} \frac{1}{ |\xi|^2} |\hat G(t,x,\xi) | d\xi\|_{L^2_x} +  \|\int_{B(0,1)^c} \frac{1}{ |\xi|^2} |\hat G(t,x,\xi) | d\xi\|_{L^2_x} \\
 &\leq \|\hat G\|_{L^2_x L^\infty_\xi} + \|\hat G\|_{L^2_{x,v}} \|\frac{1_{B(0,1)^c}}{|\xi|^2}\|_{L^2_{\xi}}  \\
 &\lesssim \|\langle v\rangle^{4} G\|_{L^2_{x,v}} \\
 &\lesssim 1
\end{align}
In the penultimate line, we used Sobolev embedding \highlightEdit{$ H^4\subset L^\infty $}.  Combining all the bounds, we get
\begin{align}
\eqref{eq:apriori_rT2} \lesssim - \frac{1}{C} \|F^{(0,0,r)}\|_{L^2_x(\mathcal H_\sigma)_v}^2 +  \|\langle v\rangle^{\frac{3}{2}}\nabla_v F\|_{L^2_{x,v}} (\| F^{(0,0,r)}\|_{L^2_{x}(\mathcal H_\sigma)_v} + \|F\|_{\mathfrak E}).
\end{align}
Next, we verify the following claim: for all $\kappa >0$, 
\begin{align}
\|\langle v\rangle^{\frac{3}{2}}\nabla_v F^{(m_0,0,0)}\|_{L^2_{x,v}}  \leq C_\kappa \|F\|_{\mathfrak E} + \kappa \|F\|_{\mathfrak D}. 
\end{align}
First,  we apply the interpolation inequality: since $m_2 -\frac{3}{2}> \frac{3}{2}$, 
\begin{align}\label{eq:vWeightInterpolation}
  \|\langle v\rangle^{\frac{3}{2}}\nabla_v F^{(m_0,0,0)}\|_{L^2_{x,v}} \lesssim \| \langle v\rangle^{m_2-\frac{3}{2}}\nabla_v F^{(m_0,0,0)} \|_{L^2_{x,v}}^{1-\theta} \|\langle v\rangle^{-\frac{3}{2}} \nabla_v F^{(m_0,0,0)}\|_{L^2_{x,v}}^{\theta}
\end{align}
for some $\theta \in (0,1)$. The first factor is controlled by $\|F^{(m_2,0,0)}\|_{L^2_x(L^2\cap \dot{\mathcal H}_\sigma)_v} \lesssim \|F\|_{\mathfrak E\cap \mathfrak D}$. 

We further interpolate the second factor as follows:
\begin{align}\label{eq:grad_vInterpolation}
\|\langle v\rangle^{-\frac{3}{2}} \nabla_v F\|_{L^2_{x,v}} &\lesssim \|\langle \nabla_v \rangle^{-1}(\langle v\rangle^{-\frac{3}{2}} \nabla_v F)\|_{L^2_{x,v}}^{\frac{r}{1+r} }\\
&\quad \cdot \|\langle \nabla_v \rangle^{r}(\langle v\rangle^{-\frac{3}{2}} \nabla_v F^{(m_0,0,0)})\|_{L^2_{x,v}}^{\frac{1}{1+r}}
\end{align}
Writing $\langle v\rangle^{-\frac{3}{2}} \nabla_v F  = \nabla_v(\langle v\rangle^{-\frac{3}{2}} F) - \nabla_v\langle v\rangle^{-\frac{3}{2}} F$, we see that the first factor in the above is bounded by $\|F\|_{L^2_{x,v}}$. 

%

As for the second, we use \eqref{eq:weightedComm},
\begin{align}
\|\langle \nabla_v \rangle^{r}(\langle v\rangle^{-\frac{3}{2}} \nabla_v F)\|_{L^2_{x,v}} &\lesssim \|\langle v\rangle^{-\frac{3}{2}} \nabla_v \langle \nabla_v\rangle^r F\|_{L^2_{x,v}}\\
&\quad  + \|[\langle \nabla_v \rangle^{r},\langle v\rangle^{-\frac{3}{2}}] \nabla_v F\|_{L^2_{x,v} }\\
&\lesssim \|F^{(0,0,r)}\|_{L^2_x(\dot{\mathcal H}_\sigma)_v} + \|\langle \nabla_v\rangle^{r} \langle v\rangle^{-\frac{3}{2}}\|_{\mathcal FL_1}\|\nabla_v F\|_{L^2_{x,v}}
\end{align}
Now, $\mathcal F\{\langle v\rangle^{-\frac{3}{2}}\}(\xi)$ behaves like $|\xi|^{-\frac{3}{2}}$ near the origin, and decays rapidly as $|\xi|\to \infty$. Thus, 
\begin{align}
\|\langle \nabla_v \rangle^{r}(\langle v\rangle^{-\frac{3}{2}}\nabla_v F)\|_{L^2_{x,v}} \lesssim  \|F^{(0,0,r)}\|_{L^2_x(\dot{\mathcal H}_\sigma)_v} +\|\nabla_v F\|_{L^2_{x,v}}.
\end{align}
Combining with \eqref{eq:vWeightInterpolation} and \eqref{eq:grad_vInterpolation}, we get
\begin{align}
   \|\langle v\rangle^{\frac{3}{2}}\nabla_v F\|_{L^2_{x,v}}  \lesssim \| \langle v\rangle^{m_2-\frac{3}{2}}\nabla_v F \|_{L^2_{x,v}}^{1-\theta} \|F\|_{L^2_{x,v}}^{\frac{\theta r}{1+r}} (\|F^{(0,0,r)}\|_{L^2_x(\dot{\mathcal H}_\sigma)_v} +\|\nabla_v F\|_{L^2_{x,v}})^{\frac{\theta}{1+r}}.
\end{align}
Hence, for all $\kappa$, we can bound
\begin{align}\label{eq:gradv_RegInterp}
   \|\langle v\rangle^{\frac{3}{2}}\nabla_v F\|_{L^2_{x,v}}  \leq C_\kappa \|F\|_{\mathfrak E} + \kappa \|F\|_{\mathfrak D}.
\end{align}
We conclude that
\begin{align}
\eqref{eq:apriori_rT2} \leq - \frac{1}{C} \|F^{(0,0,r)}\|_{L^2_x(\mathcal H_\sigma)_v}^2 +  C_\kappa \|F\|_{\mathfrak E}^2 + \kappa \|F\|_{\mathfrak D}^2.
\end{align}
Finally, we bound \eqref{eq:apriori_rT3}. We write
\begin{align}
\eqref{eq:apriori_rT3}  = \langle \partial_{v_i} \Phi_{ij} * G F^{(0,0,r)},\partial_{v_j} F^{(0,0,r)}\rangle_{L^2_{x,v}} + \langle    [\langle \nabla_v\rangle^r,\partial_{v_i}\Phi_{ij} * G] F,\partial_{v_j} F^{(0,0,r)}\rangle_{L^2_{x,v}} 
\end{align}
Similarly to bounding \eqref{eq:apriori_m2T2}, 
\begin{align}
 \langle \partial_{v_i} \Phi_{ij} * G F^{(0,0,r)},\partial_{v_j} F^{(0,0,r)}\rangle_{L^2_{x,v}}\lesssim \langle \|G\|_{ \mathfrak D}\rangle^\frac{3}{4} \|F\|_{\mathfrak E}\|F\|_{\mathfrak E\cap \mathfrak D}.
\end{align}
On the other hand, using \eqref{eq:weightedComm}, we have
\begin{align}
\langle  [\langle \nabla_v\rangle^r,\partial_{v_i}\Phi_{ij} * G] F,\partial_{v_j} F^{(0,0,r)}\rangle_{L^2_{x,v}}& \lesssim \|\langle v\rangle^2 [\langle \nabla_v\rangle^r,\partial_{v_i}\Phi_{ij} * G] F\|_{L^2_{x,v}} \|F\|_{\mathfrak E\cap \mathfrak D} \\
&\lesssim \||\nabla_v|^{-1} \langle \nabla_v\rangle^r G\|_{L^\infty_x(\mathcal FL^1)_v} \|F\|_{\mathfrak E} \|F\|_{\mathfrak E\cap \mathfrak D}.
\end{align}
Similarly to the bound on $\Phi*G \in \mathcal FL^1$, we have
\begin{align}
\||\nabla_v|^{-1} \langle \nabla_v\rangle^r G\|_{L^\infty_x(\mathcal FL^1)_v} \lesssim  \langle \|G\|_{ \mathfrak D}\rangle^\frac{3}{4} .
\end{align}
Thus, altogether,
\begin{align}
\eqref{eq:apriori_rT3} \lesssim C_\kappa \langle \|G\|\rangle^{\frac{3}{2}}\|F\|_{\mathfrak E}^2 + \kappa \|F\|_{\mathfrak D}^2.
\end{align}
 \\

\noindent \textbf{Step 3:} We now prove that for all $\kappa >0$,
\begin{align}\label{eq:xRegEnergy}
\frac{d}{dt} \|F^{(m_1,s,0)}\|_{L^2_{x,v}}^2  + \frac{1}{C} \|F^{(m_1,s,0)}\|_{L^2_{x}(\dot{\mathcal H}_\sigma)_v}^2 \leq C_\kappa \langle \|G\|\rangle^{\frac{3}{2}}\|F\|_{\mathfrak E}^2 + \kappa \|F\|_{\mathfrak D}^2.
\end{align}
Observe that
 $F^{(m_1,s,0)}$ satisfies 
\begin{align}
&(\partial_t  + v\cdot \nabla_x - \nabla_x \psi  \cdot \nabla_v )F^{(m_1,s,0)} - [\langle v\rangle^{m_1} \langle \nabla_x\rangle^s, \nabla_x \psi \cdot \nabla_v] F \\
& \quad =\langle v\rangle^{m_1} \langle \nabla_x\rangle^s Q(G,F).
\end{align}
Then,
\begin{align}
\frac{1}{2} \frac{d}{dt}\|F^{(m_1,s,0)}\|_{L^2{x,v}}^2 & =\langle [\langle v\rangle^{m_1} \langle \nabla_x\rangle^s, \nabla_x \psi \cdot \nabla_v] F,F^{(m_1,s,0)}\rangle_{L^2_{x,v}} \label{eq:aprioriT1} \\
&\quad +\langle \langle v\rangle^{m_1} \langle \nabla_x\rangle^s Q_D(G,F),F^{(m_1,s,0)}\rangle_{L^2_{x,v}}\label{eq:aprioriT2}\\
&\quad +\langle \langle v\rangle^{m_1} \langle \nabla_x\rangle^s Q_T(G,F),F^{(m_1,s,0)}\rangle_{L^2_{x,v}}.\label{eq:aprioriT3}
\end{align}
Now,
\begin{align}
\eqref{eq:aprioriT3} &=  \langle \langle v\rangle^{m_1} [ \langle \nabla_x\rangle^s, \nabla_x \psi]\cdot \nabla_v F,F^{(m_1,r)}\rangle_{L^2_{x,v}}\\
&\quad + \langle [\langle v\rangle^{m_1} , \nabla_x \psi \cdot \nabla_v]F^{(0,s)},F^{(m_1,r)}\rangle_{L^2_{x,v}}\\
&\lesssim \|\nabla_x\psi\|_{H^{s}_x}(\|\langle v\rangle^{m_1} \nabla_v F^{(0,s-1)}\|_{L^2_{x,v}} +   \|F^{(m_1,s,0)}\|_{L^2_{x,v}}) \|F^{(m_1,s,0)}\|_{L^2_{x,v}}.
\end{align}
Now,
\begin{align}
\|\langle v\rangle^{m_1} \nabla_v F^{(0,s-1)}\|_{L^2_{x,v}} & \lesssim \|\langle v\rangle^{m_1 + \frac{3}{2}(s-1)} \nabla_v F\|_{L^2_{x,v}}^{\frac{1}{s}} \|\langle v\rangle^{m_1-\frac{3}{2}} \nabla_v F^{(0,s,0)}\|_{L^2_{x,v}}^{\frac{s-1}{s}}  \\
&\lesssim   \| F^{(m_1,s,0)}\|_{L^2_x(L^2 \cap \dot{\mathcal H}_\sigma)_v}.
\end{align}
Thus,
\begin{align}
\eqref{eq:aprioriT1} \lesssim \|F\|_{\mathfrak E\cap \mathfrak D} \|F\|_{\mathfrak E}.
\end{align}
Next, we have
\begin{align}
\eqref{eq:aprioriT2} &=- \langle \Phi_{ij} *G \partial_{v_i} F^{(m_1,s,0)}, \partial_{v_j} F^{(m_1,s,0)}\rangle_{L^2_{x,v}}\label{eq:aprioriT2T1} \\
&\quad - \langle \langle \nabla_x\rangle^s\{ \Phi_{ij} *G \partial_{v_i} F^{(m_1,0,0)}\} - \Phi_{ij} *G \partial_{v_i} F^{(m_1,s,0)}, \partial_{v_j} F^{(m_1,s,0)}\rangle_{L^2_{x,v}} \label{eq:aprioriT2T2}\\
&\quad +m_1 \langle\langle \nabla_x\rangle^s \{ \Phi_{ij} *G \frac{v_i}{\langle v\rangle^2} F^{(m_1,0,0)}\}, \partial_{v_j}F^{(m_1,s,0)} \rangle_{L^2_{x,v}} \label{eq:aprioriT2T3}\\
&\quad -m_1 \langle v_j\langle v\rangle^{m_1-2}\langle \nabla_x\rangle^s\{ \Phi_{ij} *G \partial_{v_i}F\}, F^{(m_1,s,0)}\rangle_{L^2_{x,v}} \label{eq:aprioriT2T4}.
\end{align}
For \eqref{eq:aprioriT2T1}, we use \eqref{eq:PhiConvLower} to get
\begin{align}
\eqref{eq:aprioriT2T1} \leq -\frac{1}{C} \|F^{(m_1,s,0)}\|_{L^2_x(\dot{\mathcal H}_\sigma)_v}^2.
\end{align}
Using \eqref{eq:PhiConvUpper} we have
\begin{align}
 \eqref{eq:aprioriT2T2} &\lesssim \|\sigma^{-\frac{1}{2}} \Phi* G^{(0,s)}\sigma^{-\frac{1}{2}}\|_{L^\infty_{x,v}} \|F^{(m_1,s-1,0)}\|_{L^2_x(\dot{\mathcal H}_\sigma)_v} \|F^{(m_1,s,0)}\|_{L^2_x(\dot{\mathcal H}_\sigma)_v}  \\
 &\lesssim \|G\|_{\mathfrak E} \|F^{(m_1,0,0)}\|_{L^2_x(\dot{\mathcal H}_\sigma)_v}^{\frac{1}{s}} \|F^{(m_1,s,0)}\|_{L^2_x(\dot{\mathcal H}_\sigma)_v} ^{2 - \frac{1}{s}}.
\end{align}
Similarly as with \eqref{eq:gradv_RegInterp}, we can bound 
\begin{align}
\|F^{(m_1,0,0)}\|_{L^2_x(\dot{\mathcal H}_\sigma)_v} &\lesssim \|F^{(m_1,0,0)}\|_{L^2_{x,v}} + \|\langle v\rangle^{m_1 - \frac{1}{2}} \nabla_v F\|_{L^2_{x,v}} \\
&\leq 
C_\kappa \|F\|_{\mathfrak E} + \kappa \|F\|_{\mathfrak D}.
\end{align}
for all $\kappa >0$.
Thus,
\begin{align}
 \eqref{eq:aprioriT2T2} &\leq C_\kappa \|F\|_{\mathfrak E}^2 + \kappa \|F\|_{\mathfrak D}^2.
\end{align}
Finally, we  have
\begin{align}
\eqref{eq:aprioriT2T3}  + \eqref{eq:aprioriT2T4}& \leq C \|G\|_{\mathfrak E} \|F^{(m_1,s,0)}\|_{L^2_x(\dot{\mathcal H}_\sigma)_v}  \|F^{(m_1,s,0)}\|_{L^2_{x,v}}\\
&\leq C_\kappa \|F\|_{\mathfrak E}^2 + \kappa \|F\|_{\mathfrak D}^2.
\end{align}
Thus, altogether,
\begin{align}
\eqref{eq:aprioriT2} \leq -\frac{1}{C} \|F^{(m_1,s,0)}\|_{L^2_x(\dot{\mathcal H}_\sigma)_v}^2 + C_\kappa \|F\|_{\mathfrak E}^2 + \kappa \|F\|_{\mathfrak D}^2.
\end{align}
Finally,
\begin{align}
\eqref{eq:aprioriT3} &= \langle \langle \nabla_x\rangle^s \{\partial_{v_i} \Phi_{ij} * GF^{(m_1,0,0)}\}, \partial_{v_j}F^{(m_1,s,0)}  \rangle_{L^2_{x,v}}\\
&\quad  + m_1  \langle \frac{v_j}{\langle v\rangle^2} \langle \nabla_x\rangle^s \{\partial_{v_i} \Phi_{ij} * GF^{(m_1,0,0)}\}, F^{(m_1,s,0)}  \rangle_{L^2_{x,v}} \\
& \lesssim \|\langle v\rangle^2\nabla_v \Phi *G^{(0,s)}\|_{L^2_{x}L^\infty_v}\|F\|_{\mathfrak E \cap \mathfrak D}\|F\|_{\mathfrak E} \\
&\lesssim \|G\|_{\mathfrak E}^{\frac{1}{4}} \|G\|_{\mathfrak E \cap \mathfrak D}^{\frac{3}{4}}\|F\|_{\mathfrak E \cap \mathfrak D}\|F\|_{\mathfrak E}\\
&\lesssim  C_\kappa \langle \|G\|_{\mathfrak E \cap \mathfrak D}\rangle^{\frac{3}{2}}\|F\|_{\mathfrak E}^2 + \kappa \|F\|_{\mathfrak D}^2.
\end{align}
In the above, we used a similar estimate as in \eqref{eq:gradPhiBd}.
Combining the bounds for \eqref{eq:aprioriT1}, \eqref{eq:aprioriT2} and \eqref{eq:aprioriT3}, we have proved \eqref{eq:xRegEnergy}. Combining with the previous steps, we conclude with \eqref{eq:apriori}.\\

\noindent \textbf{Step 4:} Adding \eqref{eq:vWeightEnergy}, \eqref{eq:vRegEnergy} and \eqref{eq:xRegEnergy}, and taking $\kappa$ sufficiently small, we have
\begin{align}
\frac{d}{dt} \|F\|_{\mathfrak E}^2 + \frac{1}{C} \|F\|_{\mathfrak D}^2 \lesssim \langle \|G\|_{\mathfrak D}\rangle^{\frac{3}{2}} \|F\|_{\mathfrak E}^2.
\end{align}
Then, \eqref{eq:apriori} follows from Gr\"onwall's inequality.

\end{proof}

\subsection{Contraction estimates}
In this section, we prove analogous bounds from the previous section on differences of solutions to  \eqref{eq:FG}. These bounds shall be used to show that the approximate solutions constructed in Section \ref{sec:construction} converge to a true solution.

\begin{proposition}\label{prop:difference}

Let $M >0$, and  $G_\alpha(t,x,v) \geq 0$ and $\psi_\alpha(t,x)$ with $\alpha \in \{1,2\}$ satisfy
\begin{align}\label{eq:bootstrap_alpha}
\sup_{t \in [0,T], \alpha \in\{1,2\}} \left(\|\frac{1}{\int_{\mathbf R^3} G_\alpha dv}\|_{L^\infty_x}+\|G_\alpha\|_{\mathfrak E} + \| \psi_\alpha\|_{H^{s+1}_x}\right) \leq M\highlightEdit{.}
\end{align}
Suppose $F_\alpha$ is a solution to \eqref{eq:FG} with $\psi = \psi_\alpha$ and $G = G_\alpha$ and initial data $F_{\alpha,in}$. Let $ \delta F = F_2 - F_1$, $ \delta G = G_2 - G_1$ and $\delta \psi = \psi_2 - \psi_1$. Then,
\begin{equation}\label{eq:contraction}
\begin{aligned}
&\|\delta F\|_{\mathfrak E'_T}^2+  \|\delta F\|_{\mathfrak D'_T}^2\\
  &\quad \lesssim_M e^{C_M (T + \sqrt{T} \|G_2\|_{\mathfrak D_T}^\frac{3}{2} )} \{ \|\delta F_{in}\|_{\mathfrak E'}^2  \\
&\quad \quad + \int_0^T (\|\delta\psi(t)\|_{H^{s+1}}^2+ \|\delta G(t)\|_{\mathfrak E'}^2)\|F_1(t)\|_{\mathfrak E \cap \mathfrak D'}^2  dt\\
&\quad \quad +  \int_0^T \|\delta G(t)\|_{\mathfrak E'}^\frac{1}{2} \|\delta G(t)\|_{ \mathfrak D'}^\frac{3}{2}\|F_1(t)\|_{ \mathfrak E} ^2 dt\}.
\end{aligned}
\end{equation}

\end{proposition}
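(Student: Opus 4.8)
The plan is to mimic the energy estimates of Proposition~\ref{prop:aprioriFG}, applied now to the difference equation. Subtracting the two copies of \eqref{eq:FG}, one for $(F_2,G_2,\psi_2)$ and one for $(F_1,G_1,\psi_1)$, and writing $\delta F = F_2 - F_1$, $\delta G$, $\delta\psi$ as in the statement, one obtains
\begin{align}
\partial_t \delta F + \{v\cdot \nabla_x - \nabla_x\psi_2\cdot \nabla_v\}\delta F = Q(G_2,\delta F) + Q(\delta G, F_1) + \nabla_x\delta\psi\cdot\nabla_v F_1.
\end{align}
The term $Q(G_2,\delta F)$ is handled exactly as $Q(G,F)$ was in Proposition~\ref{prop:aprioriFG}, with $G_2$ playing the role of $G$ and $\delta F$ that of $F$; the bootstrap \eqref{eq:bootstrap_alpha} guarantees the hypothesis \eqref{eq:bootstrapG} on $G_2$, so this produces the good dissipative term $-\frac1C\|\delta F\|_{\mathfrak D'}^2$ plus $C_M\langle\|G_2\|_{\mathfrak D}\rangle^{3/2}\|\delta F\|_{\mathfrak E'}^2$, which is absorbed by Grönwall at the end. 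Note that one works in the \emph{primed} spaces $\mathfrak E'$, $\mathfrak D'$ — one fewer derivative and lower moments — so there is room to lose a derivative/moment on $F_1$ in the source terms.

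Next I would estimate the two genuinely new source terms. For the transport term $\nabla_x\delta\psi\cdot\nabla_v F_1$: pairing against the relevant weighted/derived copies of $\delta F$, moving one $v$-derivative onto $\delta F$ by integration by parts when it helps, and using $H^{s+1}\subset H^s\subset L^\infty$ for $\nabla_x\delta\psi$, one bounds this by $\|\delta\psi\|_{H^{s+1}}\|F_1\|_{\mathfrak E\cap\mathfrak D'}\|\delta F\|_{\mathfrak E'\cap\mathfrak D'}$, with the $\nabla_v F_1$ factor absorbed via the same weighted interpolation as in \eqref{eq:vWeightInterpolation}--\eqref{eq:gradv_RegInterp} (this is why $\|F_1\|_{\mathfrak D'}$, not just $\|F_1\|_{\mathfrak E'}$, appears on the right). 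One must also commute $\langle v\rangle^{m}\langle\nabla_x\rangle^s$ past $\nabla_x\delta\psi\cdot\nabla_v$, producing commutator terms identical in structure to \eqref{eq:aprioriT1}. For the collision source $Q(\delta G, F_1) = Q_D(\delta G, F_1) + Q_T(\delta G, F_1)$: here I would reuse the bounds on $\Phi\ast(\cdot)$ and $\nabla_v\Phi\ast(\cdot)$ from Lemma~\ref{lem:PhiUpperLower} and from \eqref{eq:gradPhiBd}, but now applied to $\delta G$ rather than $G$. The upper bound \eqref{eq:PhiConvUpper} gives $|\Phi_{ij}\ast\delta G\,\nu_i\nu_j|\lesssim\|\langle v\rangle^5\delta G\|_{L^2}\sigma_{ij}\nu_i\nu_j\lesssim\|\delta G\|_{\mathfrak E'}\,\sigma_{ij}\nu_i\nu_j$, so the diffusion source contributes $\lesssim\|\delta G\|_{\mathfrak E'}\|F_1\|_{\mathfrak D'}\|\delta F\|_{\mathfrak D'}$, Cauchy–Schwarzed into $\kappa\|\delta F\|_{\mathfrak D'}^2 + C_\kappa\|\delta G\|_{\mathfrak E'}^2\|F_1\|_{\mathfrak D'}^2$. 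The transport source $Q_T(\delta G,F_1)$ brings in $\nabla_v\Phi\ast\delta G$, and running the argument of \eqref{eq:gradPhiBd} verbatim with $G\mapsto\delta G$ yields $\|\langle v\rangle^2\nabla_v\Phi\ast\delta G\|_{L^\infty}\lesssim\|\delta G\|_{\mathfrak E'}+\|\delta G\|_{\mathfrak E'}^{1/4}\|\delta G\|_{\mathfrak D'}^{3/4}$ — this is precisely the origin of the final integrand $\|\delta G\|_{\mathfrak E'}^{1/2}\|\delta G\|_{\mathfrak D'}^{3/2}\|F_1\|_{\mathfrak E}^2$ after squaring and Young's inequality.

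Assembling: adding the weighted-moment, the $x$-regularity (no $v$-regularity term is needed in the primed spaces), differential inequalities gives
\begin{align}
\frac{d}{dt}\|\delta F\|_{\mathfrak E'}^2 + \frac1C\|\delta F\|_{\mathfrak D'}^2 \lesssim_M \langle\|G_2\|_{\mathfrak D}\rangle^{3/2}\|\delta F\|_{\mathfrak E'}^2 + (\|\delta\psi\|_{H^{s+1}}^2 + \|\delta G\|_{\mathfrak E'}^2)\|F_1\|_{\mathfrak E\cap\mathfrak D'}^2 + \|\delta G\|_{\mathfrak E'}^{1/2}\|\delta G\|_{\mathfrak D'}^{3/2}\|F_1\|_{\mathfrak E}^2 ,
\end{align}
and Grönwall (with $\int_0^T\langle\|G_2\|_{\mathfrak D}\rangle^{3/2}\,dt\lesssim T + \sqrt T\|G_2\|_{\mathfrak D_T}^{3/2}$ by Hölder) yields \eqref{eq:contraction}.

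**The main obstacle** I anticipate is bookkeeping rather than any new idea: one must verify that in the primed spaces there is genuinely enough room to place $\langle v\rangle^{m_1}\langle\nabla_x\rangle^s$ on $F_1$ in the source terms while only ever needing $\langle\nabla_x\rangle^s$ (not $\langle\nabla_x\rangle^{s+1}$) and moments $m_0, m_1$ — in particular that $\|F_1\|_{\mathfrak E\cap\mathfrak D'}$ on the right is controlled by the (unprimed) $\mathfrak E_T\cap\mathfrak D_T$ norm of $F_1$, so that when this proposition is fed into the iteration the right-hand side closes. The one spot requiring genuine care is the commutator $[\langle\nabla_x\rangle^s, v\cdot\nabla_x]\delta F$ in the $x$-regularity estimate, which is where (as noted in the remark following Theorem~\ref{thm:LWP}) the hypothesis $s > \tfrac52$ and the interpolation argument of Step 3 of Proposition~\ref{prop:aprioriFG} are used; but since we are one derivative below $\mathfrak E$, the moments $m_1, m_2$ available in the unprimed norm of $\delta F$ (which we do \emph{not} control) are replaced by $m_0, m_1$ — one must check the interpolation still closes, which it does because the gap between $m_0 = 5$ and $m_1 = 10$ is the same size as before.
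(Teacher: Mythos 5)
Your proposal is correct and follows essentially the same route as the paper: view $\delta F$ as a solution of \eqref{eq:FG} with data $(G_2,\psi_2)$ and forcing $Q(\delta G,F_1)+\nabla_x\delta\psi\cdot\nabla_v F_1$, rerun the energy estimates of Proposition \ref{prop:aprioriFG} in the primed spaces (which is legitimate since $m_0\geq 5$ and $m_0+\tfrac32 s\leq m_1$), bound the sources via Lemma \ref{lem:PhiUpperLower} and the $\nabla_v\Phi\ast\delta G$ estimate of type \eqref{eq:gradPhiBd}, and conclude by Gr\"onwall. The only cosmetic difference is at the assembly step: the paper's $(m_0,s)$ estimate carries an extra $C\|\delta F^{(m_1,0,0)}\|_{L^2_x(\dot{\mathcal H}_\sigma)_v}^2$ term and is absorbed by taking a small-$\lambda$ weighted combination of the two component estimates, whereas you add them directly, which works provided you keep the $\kappa$-absorption of Proposition \ref{prop:aprioriFG} as you indicate.
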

\begin{proof} 
Similarly as in Proposition \ref{prop:aprioriFG}, we denote  $F^{(m',r',s')} = \langle v\rangle^{m'} \langle \nabla_x\rangle^{s'} \langle \nabla_v\rangle^{r'} F$, and similarly for $G$, $\delta G$ and $\delta F$. 
  Then, $\delta F$ solves 
\begin{align}
&\partial_t \delta F + \{v\cdot \nabla_x - \nabla_x \psi_2 \cdot \nabla_v\}\delta F - Q(G_2, \delta F) = Q(\delta G,F_1) + \nabla_x \delta  \psi \cdot \nabla_v F_1.
\end{align}
Observe that $\delta F$ satisfies the same type of equation as \eqref{eq:FG}, but with the forcing term $Q(\delta G, F_1) + \nabla_x \delta \psi\cdot \nabla_v F_1$. We note here that the proof of Proposition \ref{prop:aprioriFG} requires that $5\leq m_1$ and $m_1 + \frac{3}{2} s\leq m_2$. Thus, 
modifying the proof of \eqref{eq:vWeightEnergy}, we have
\begin{align}
&\frac{d}{dt} \|\delta F^{(m_1,0,0)}\|_{L^2_{x,v}}^2 + \frac{1}{C} \|\delta F^{(m_1,0,0)}\|_{L^2_{x}(\dot{\mathcal H}_\sigma)_v}^2\\
&\quad \lesssim (1 + \|G_2\|_{\mathfrak D}^\frac{3}{2})\|\delta F\|_{\mathfrak E'}^2 \\
&\quad \quad + \langle \langle v\rangle^{m_1}Q(\delta G,F_1),\delta F^{(m_1,0,0)}\rangle_{L^2_{x,v}} \\
&\quad \quad + \langle \langle v\rangle^{m_1} \nabla_x \delta \psi \cdot \nabla_v F_1,\delta F^{(m_1,0,0)}\rangle_{L^2_{x,v}}.
\end{align}
Now, again modifying the proof of \eqref{eq:vWeightEnergy}, we have
\begin{align}
&\langle \langle v\rangle^{m_1}Q(\delta G,F_1),\delta F^{(m_1,0,0)}\rangle_{L^2_{x,v}}\\
& \lesssim \|\delta G\|_{\mathfrak E'}\|F_1^{(m_1,0,0)}\|_{L^2_x(L^2\cap \dot{\mathcal H}_\sigma)_v}\|\delta F^{(m_1,0,0)}\|_{L^2_x(L^2\cap\dot{\mathcal H}_\sigma)_v} \\
&\quad +\|\delta G\|_{\mathfrak E'}^\frac{1}{4} \|\delta G\|_{\mathfrak E'\cap \mathfrak D'}^\frac{3}{4} \|F_1^{(m_1,0,0)}\|_{L^2_{x,v}}\|\delta F^{(m_1,0,0)}\|_{L^2_x(L^2\cap \dot{\mathcal H}_\sigma)_v}
\end{align}
Also,
\begin{align}
& \langle \langle v\rangle^{m_1} \nabla_x \delta \psi \cdot \nabla_v F_1,\delta F^{(m_1,0,0)}\rangle_{L^2_{x,v}} \\
 &\quad= -\langle \nabla_x \delta \psi F_1^{(m_1+\frac{3}{2},0)}, \nabla_v \delta F^{(m_1-\frac{3}{2},0)}\rangle_{L^2_{x,v}} \\
 &\quad\quad -(m_1 + \frac{3}{2})\langle  \frac{v}{\langle v\rangle^2} \cdot \nabla_x \delta \psi F_1^{(m_1,0,0)},\delta F^{(m_1,0,0)}\rangle_{L^2_{x,v}}  \\
 &\quad   \lesssim \|\delta\psi\|_{H^{s+1}_x} \| F_1\|_{\mathfrak E} \|(\delta F^{(m_1,0,0)},\nabla_v \delta F^{(m_1-\frac{3}{2},0,0)})\|_{L^2_{x,v}} \\
 &\quad \lesssim  \|\delta\psi\|_{H^{s+1}_x} \| F_1\|_{\mathfrak E} \|\delta F^{(m_1,0,0)}\|_{L^2_x(L^2 \cap \dot{ \mathcal H}_\sigma)_v}.
\end{align}
Altogether, 
\begin{align}
&\frac{d}{dt} \|\delta F^{(m_1,0,0)}\|_{L^2_{x,v}}^2 + \frac{1}{C} \|\delta F^{(m_1,0,0)}\|_{L^2_{x}(\dot{\mathcal H}_\sigma)_v}^2\\
&\quad \leq C\{ (1 + \|G_2\|_{\mathfrak D}^\frac{3}{2} )\|\delta F\|_{\mathfrak E'}^2 +( \|\delta \psi\|_{H^{s+1}_x}^2+ \|\delta G\|_{\mathfrak E'}^2) \|F_1\|_{\mathfrak E\cap \mathfrak D'} ^2 \\
&\quad \quad + \|\delta G\|_{\mathfrak E'}^\frac{1}{2} \|\delta G\|_{ \mathfrak D'}^\frac{3}{2} \|F_1\|_{ \mathfrak E} ^2\}.
\end{align}
Similarly, we also have
\begin{align}
&\frac{d}{dt} \|\delta F^{(m_0,s)}\|_{L^2_{x,v}}^2 + \frac{1}{C} \|\delta F^{(m_0,s)}\|_{L^2_{x}(\dot{\mathcal H}_\sigma)_v}^2\\
&\quad \leq C\{ \langle \|G_2\|_{\mathfrak D}\rangle^\frac{3}{2} \|\delta F\|_{\mathfrak E}^2 +  ( \|\delta\psi\|_{H^{s+1}_x}+ \|\delta G\|_{\mathfrak E'}^2 )\|F_1\|_{\mathfrak E \cap \mathfrak D'}^2 \\
&\quad\quad +  \|\delta G\|_{\mathfrak E'}^\frac{1}{2}\|\delta G\|_{ \mathfrak D'}^\frac{3}{2} \|F_1\|_{\mathfrak E}^2  +  \|\delta F^{(m_1,0,0)}\|_{L^2_{x}(\dot{\mathcal H}_\sigma)_v}^2\}.
\end{align}
Combining the bounds on $F^{(m_1,0,0)}$ and $F^{(m_0,s,0)}$, we have the following: for some choice of $\lambda>0$ sufficiently small depending on $M>0$, we have
\begin{align}
&\frac{d}{dt} \left( \lambda  \|\delta F^{(m_0,s)}\|_{L^2_{x,v}}^2 +   \|\delta F^{(m_1,0,0)}\|_{L^2_{x,v}}^2 \right) +\frac{1}{C} \|\delta F\|_{\mathfrak D'}^2 \\
&\quad \leq C\{ \langle \|G_2\|_{\mathfrak D}\rangle^\frac{3}{2} \|\delta F\|_{\mathfrak E'}^2 +( \|\delta \psi\|_{H^{s+1}_x}^2+ \|\delta G\|_{\mathfrak E'}^2) \|F_1\|_{\mathfrak E\cap \mathfrak D'} ^2 \\
&\quad \quad + \|\delta G\|_{\mathfrak E'}^\frac{1}{2} \|\delta G\|_{ \mathfrak D'}^\frac{3}{2} \|F_1\|_{ \mathfrak E} ^2\}.
\end{align}
Now, $\lambda  \|\delta F^{(m_0,s,0)}\|_{L^2_{x,v}}^2 +   \|\delta F^{(m_1,0,0)}\|_{L^2_{x,v}}^2 \sim \|\delta F\|_{\mathfrak E'}^2$. Hence,
by Gr\"onwall's inequality, we deduce \eqref{eq:contraction}.
\end{proof}

\section{Construction of Solutions}
\label{sec:construction}
\subsection{Solutions to the linear system}
In this section, we use the \textit{a priori} estimates to give a construction scheme for the systems \eqref{eq:VPLunit} and \eqref{eq:VPLion''} using an iteration scheme. Before we can do this, however, we need the following Lemma which guarantees that solutions to the linear system exist (under very strong hypotheses):
\begin{proposition}\label{prop:linearLWP}
\highlightEdit{Fix $T>0$.}
Suppose $F_{in} = F_{in}(x,v)$ satisfies 
\begin{align}
&F_{in} \geq 0\\
&e^{p|v|^2} \nabla_{x,v}^r F_{in} \in L^\infty(\mathbf T^3\times \mathbf R^3), \quad \text{ for all } p\geq 0, r\in \mathbf N_0.
\end{align}
Next, suppose $G = G(t,x,v)$ satisfies 
\begin{align}
&G\geq 0, \label{eq:positivity} \\
&e^{p|v|^2} \nabla_{t,x,v}^r G \in L^\infty([0,T]\times\mathbf T^3\times \mathbf R^3), \quad \text{ for all } p\geq 0, r\in \mathbf N_0. \label{eq:SchwarzG}\end{align}
\highlightEdit{Moreover, let $\psi = \psi(t,x) \in C^\infty([0,T]\times \mathbf T^3)$.}

Then, there exists a unique \highlightEdit{$F = F(t,x,v) \in C^\infty([0,T]\times \mathbf T^3 \times \R^3)$ that solves \eqref{eq:FG}  with initial data $F(t=0) = F_{in}$.  Additionally, $F$ satisfies  the same inequalities as $G$: more precisely, $F(t,x,v)\geq 0$ for all $(t,x,v)\in [0,T]\times \mathbf T^3 \times \mathbf R^3$, and we have $e^{p|v|^2} \nabla_{t,x,v}^r F \in L^\infty([0,T]\times \mathbf T^3 \times \mathbf R^3)$ for any $p\geq 0$ and $r \in \mathbf N_0$. }

\end{proposition}

\begin{proof}
Fix $ p \geq  0$. 
Observe that if such a solution to \eqref{eq:FG} exists, then $U_p = e^{p(|v|^2 +2 \psi)}F$ solves
\begin{align}\label{eq:Up}
(\partial_t -  \partial_t \psi  )U_p  +\{v\cdot \nabla_x -\nabla_x \psi\cdot \nabla_v \} U_p  = e^{p|v|^2} Q(G, e^{-p|v|^2} U_p).
\end{align}
We now construct solutions to the above.
Take $\lambda \in (0,1]$. Consider the following initial value problem:
\begin{equation}
\begin{aligned}\label{eq:Flambda}
&(\partial_t-\partial_t \psi)U_p^{(\lambda)} + \{\frac{v}{\langle \lambda v \rangle}\cdot \nabla_x -\nabla_x \psi\cdot \nabla_v \} U_p^{(\lambda)} \\
&\quad \quad=  e^{p|v|^2} Q(G, e^{-p|v|^2} U_p)+ \lambda\Delta_{x,v}U_p^{(\lambda)},\\
&U_p^{(\lambda)}(t=0) = e^{p(|w|^2 + \psi)} F_{in}.
\end{aligned} 
\end{equation}
Observe that
\begin{align}\label{eq:Qp}
e^{p|v|^2} Q(G, e^{-p|v|^2} U_p^{(\lambda)}) &= \partial_{v_j} (\Phi_{ij} * G \partial_{v_i} U_p^{(\lambda)}- \partial_{v_i} \Phi_{ij} * G U_p^{(\lambda)})\\
&\quad +p(p v_i v_j  -\delta_{ij})\Phi_{ij}* G U_p^{(\lambda)}  - p  v_i \Phi_{ij}*G\partial_{v_j}U_p^{(\lambda)}.
\end{align}
By Lemma \ref{lem:PhiUpperLower}, we see that $\partial_{v_i} \Phi_{ij} * G$, $\Phi_{ij}*G v_i $ and $p(p v_i v_j  -\delta_{ij})\Phi_{ij}* G$ are all bounded. Hence, this a uniformly parabolic PDE with $C^\infty$ coefficients, and thus there exists a unique strong solution  $U_p^{(\lambda)}$ to the above in \highlightEdit{$ \bigcap_{r \geq 0} H^r([0,T]\times \mathbf T^3 \times \mathbf R^3)$.} This fact follows from Chapter 2 of \cite{krylov2008lectures}, with small modifications needed for the domain $\mathbf T^3 \times \mathbf R^3$.
 By applying $\nabla_{x,v}^r$ to both sides, and integrating against $ \nabla_{x,v}^r U_p^{(\lambda)}$, we have for all $m \geq 0$, and $r \in \mathbf N_0$,  
\begin{align}
&\frac{1}{2} \frac{d}{dt} \|\nabla_{x,v}^{r}  U_p^{(\lambda)}\|_{L^2_{x,v}}^2 + \frac{\lambda}{2}  \| \nabla_{x,v}^{r+1} U_p^{(\lambda)}\|_{L^2_{x,v}}^2 \\
&\quad \leq  \langle \nabla_{x,v}^{r}  U_p^{(\lambda)}, \nabla_{x,v}^{r} \{e^{-p|w|^2} Q(G,e^{-p|w|^2}U_p^{(\lambda)}) \}\rangle_{L^2_{x,v}}  \\
&\quad \quad + C\sum_{k = 1}^{r} \|| \nabla_{v}^{k} \left(\frac{v}{\langle \lambda v \rangle}\right)|| \nabla_{x,v}^{r -k+1} U_p^{(\lambda)}|\|_{L^2_{x,v}} \| \nabla_{x,v}^r U_p^{(\lambda)}\|_{L^2_{x,v}}\\
&\quad \quad + C \sum_{k = 1}^{r} \| |\nabla_x^{k+1} \psi| | \nabla_{x,v}^{r -k+1} U_p^{(\lambda)}|\|_{L^2_{x,v}} \|\nabla_{x,v}^r U_p^{(\lambda)}\|_{L^2_{x,v}} \\
&\quad \leq \langle\nabla_{x,v}^{r}  U_p^{(\lambda)},\nabla_{x,v}^{r} \{e^{-p|w|^2} Q(G,e^{p|w|^2}U_p^{(\lambda)})\} \rangle_{L^2_{x,v}}  + C_{\psi,r}  \| U_p^{(\lambda)}\|_{H^k_{x,v}}.
\end{align}
In the above, we use $\nabla_{v}^{k} \left(\frac{v}{\langle \lambda v \rangle}\right) \lesssim_k 1$. Now, using \eqref{eq:Qp},
\begin{align}
&\langle\nabla_{x,v}^{r}  U_p^{(\lambda)},\nabla_{x,v}^{r} \{e^{-p|w|^2} Q(G,e^{p|w|^2}U_p^{(\lambda)})\} \rangle_{L^2_{x,v}}\\
  &\quad =-\langle \nabla_{x,v}^r\{\Phi_{ij} * G \partial_{v_i} U_p^{(\lambda)}\}, \partial_{v_j} \nabla_{x,v}^r U_p^{(\lambda)}\rangle_{L^2_{x,v}} \\
  &\quad\quad +\langle \nabla_{x,v}^r\{\partial_{v_i} \Phi_{ij} * G U_p^{(\lambda)}\}, \partial_{v_j} \nabla_{x,v}^r U_p^{(\lambda)}\rangle_{L^2_{x,v}}\\
&\quad\quad +p\langle \nabla_{x,v}^r\{ (p v_i v_j  -\delta_{ij})\Phi_{ij}* G U_p^{(\lambda)}\},\nabla_{x,v}U_p^{(\lambda)}\rangle_{L^2_{x,v}} \\
&\quad \quad -p \langle \nabla_{x,v}^r \{  v_i \Phi_{ij}*G\partial_{v_j}U_p^{(\lambda)}\},\nabla_{x,v}^r U_p^{(\lambda)}\rangle_{L^2_{x,v}} \\
&\quad \leq -\langle \Phi_{ij} * G \partial_{v_i}\nabla_{x,v}^r U_p^{(\lambda)}, \partial_{v_j} \nabla_{x,v}^r U_p^{(\lambda)}\rangle_{L^2_{x,v}} \\
&\quad \quad + C_{G,p,r} \|  U_p^{(\lambda)}\|_{H^r_{x,v}}^2
\end{align}
Therefore,
\begin{align}
\frac{d}{dt} \|\nabla_{x,v}^{r}  U_p^{(\lambda)}\|_{L^2_{x,v}}^2 \leq C_{G,\psi,p,r} \| \nabla_{x,v}^r U_p^{(\lambda)}\|_{L^2_{x,v}}^2.
\end{align}
 Thus, for all $p \geq 0$, $r \in \mathbf N_0$,
\begin{align} \label{eq:SchwarzFapprox}
\|\nabla_{x,v}^r U_p^{(\lambda)}\|_{L^\infty_t([0,T];L^2_{x,v})} \lesssim_{\highlightEdit{T,F_{in}, G, \psi,p,r}} 1.
\end{align}
Next, for two different $\lambda_1$ and $\lambda_2$, we have that $U_p^{(\lambda_2)} - U_p^{(\lambda_1)}$ solves
\begin{align}
&\partial_t (U_p^{(\lambda_2)} - U_p^{(\lambda_1)}) + \{\frac{v}{\langle \lambda_1 v \rangle}\cdot \nabla_x +\nabla_x \psi\cdot \nabla_v \}(U_p^{(\lambda_2)} - U_p^{(\lambda_1)})\\
&= e^{p|v|^2}Q(G,e^{-p|v|^2}U) + \lambda_1\Delta_{x,v}(U_p^{(\lambda_2)} - U_p^{(\lambda_1)})\\
&\quad + \left(\frac{v}{\langle \lambda_1 v \rangle} - \frac{v}{\langle \lambda_2 v \rangle}\right)\cdot \nabla_x  U_p^{(\lambda_2)} + (\lambda_2 - \lambda_1) \Delta_{x,v} U_p^{(\lambda_2)}.
\end{align}
Now, using \eqref{eq:SchwarzFapprox}, and following similar \textit{a priori} estimates, we have
\begin{align}
\|U_p^{(\lambda_2)} - U_p^{(\lambda_1)}\|_{L^\infty_t([0,T];L^2_{x,v})} \lesssim_{\highlightEdit{T,F_{in},G,\psi,p}} |\lambda_1 -\lambda_2|.
\end{align}
Thus, there exists $U_p \in L^\infty_t([0,T];L^2_{x,v})$ such that  $\lim_{\lambda  \downarrow 0} \|U_p^{(\lambda)} - U_p\|_{L^\infty_t([0,T];L^2_{x,v})} = 0$.
Moreover, we have that  $U_p$ also satisfies \eqref{eq:SchwarzFapprox}. In particular, $U_p$ is a classical solution to \eqref{eq:Up}.  
Next, by a similar estimate, we have that $U_p = e^{p (|v|^2 + 2\psi)} U_0$ for all $p\geq 0$. Setting $F = U_0$, we have that $F$ is a classical solution to \eqref{eq:FG}. Then, for all $p, r$, we have
\begin{align}
\|e^{p|w|^2} F\|_{L^\infty_t([0,T];H^r_{x,v})} \lesssim_{\psi, p,r} \|U_{2p}\|_{{L^\infty_t([0,T];H^r_{x,v})}} \lesssim_{\highlightEdit{T,F_{in},G,\psi, p,r}} 1.
\end{align}
Combining the above estimate with \eqref{eq:FG}, we can show that $\partial_t^k F$ satisfies the same estimate as the above, allowing us to conclude that \eqref{eq:SchwarzG} holds for $F$.

It remains to check that $F \geq 0$. Now, recall that 
\[\nabla_{t,x,v} (\mathbf  1_{\{F < 0\}} F) = \mathbf    1_{\{F< 0\}} \nabla_{t,x,v}  F \text{ a.e.}
\] Thus, we have
\begin{align}
\frac{d}{dt} \|\mathbf  1_{\{F<0\}} F\|_{L^2_{x,v}}^2& = \highlightEdit{2}\langle \partial_t F, \mathbf 1_{\{F<0\}} F\rangle_{L^2_{x,v}}, \\
\langle \{v\cdot \nabla_x - \nabla_x \psi\} F , \mathbf 1_{\{F<0\}} F\rangle_{L^2_{x,v}} & = 0,\\
\langle Q(G,F) ,\mathbf  1_{\{F<0\}} F\rangle_{L^2_{x,v}} &= - \langle \Phi_{ij}  * G\mathbf  1_{\{F<0\}}\partial_{v_i} F, \partial_{v_j}F\rangle_{L^2_{x,v}} \label{eq:indicatorDiffusionTerm1} \\
& \quad + \langle \partial_{v_i}\partial_{v_j} \Phi_{ij}  * G \mathbf 1_{\{F<0\}}F, F\rangle_{L^2_{x,v}}  \label{eq:indicatorDiffusionTerm2}.
\end{align}
\highlightEdit{
Therefore,
\begin{align}
&\frac{1}{2}\frac{d}{dt} \|\mathbf  1_{\{F<0\}} F\|_{L^2_{x,v}}^2 \\
&\quad =- \langle \Phi_{ij}  * G\mathbf  1_{\{F<0\}}\partial_{v_i} F, \partial_{v_j}F\rangle_{L^2_{x,v}}  + \langle \partial_{v_i}\partial_{v_j} \Phi_{ij}  * G \mathbf 1_{\{F<0\}}F, F\rangle_{L^2_{x,v}}.
\end{align}
}
\highlightEdit{Regarding the right hand side of \eqref{eq:indicatorDiffusionTerm1}, we have that $\Phi_{ij}*G(t,x,v) \mathbf 1_{F(t,x,v)<0}$ defines a positive semi-definite matrix for any $(t,x,v)$. Thus, 
\[
 \langle \Phi_{ij}  * G\mathbf  1_{\{F<0\}}\partial_{v_i} F, \partial_{v_j}F\rangle_{L^2_{x,v}} \geq0.
\]
On the other hand, regarding \eqref{eq:indicatorDiffusionTerm2}, we have the identity
\[
\partial_{v_i}\partial_{v_j}\Phi_{ij}*G(t,x,v)  = 8\pi G(t,x,v).
\]
Hence,
\[
\langle Q(G,F) ,\mathbf  1_{\{F<0\}} F\rangle_{L^2_{x,v}} \leq 8\pi \|G\|_{L^\infty} \| 1_{\{F<0\}} F\|_{L^2_{x,v}}^2 \lesssim_G \| \mathbf 1_{F< 0}  F\|_{L^2_{x,v}}^2.
\]
Therefore,
\begin{align}
\frac{d}{dt} \| \mathbf 1_{F< 0}  F\|_{L^2_{x,v}}^2 \leq  C_{G}\| \mathbf 1_{F< 0}   F\|_{L^2_{x,v}}^2. 
\end{align}
We then apply Gr\"onwall's inequality to get 
\begin{align}
0\leq \| \mathbf 1_{F< 0}   F\|_{L^2_{x,v}}^2 \leq C_{\highlightEdit{T,G}} \| \mathbf 1_{F< 0}  F_{in}\|_{L^2_{x,v}}^2 =0.
\end{align}
We conclude $F \geq 0$.}
\end{proof}

\subsection{Construction of solutions to the VPL system} We now have everything we need to prove the second part of the main theorem.\\

\noindent \textit{Proof of Theorem \ref{thm:LWP}-\highlightEdit{(ii)}:} We break the proof into a number of steps---first, construction of solutions for smooth, compactly supported initial data; second, general data in $\mathfrak E$; third, we show continuity in time in $\mathfrak E$; and finally, we show the solutions constructed here are unique. \\

\noindent \textbf{Step 1 (iteration scheme, boundedness):} 
For all $N \geq 1$, fix nonnegative $F_{+,in}^N,F_{-,in}^N\in C^\infty_c(\mathbf T^3 \times\mathbf R^3) $, and $0 < M < \infty$ sufficiently large such that
\begin{align}\label{eq:M/2}
 \|(\frac{1}{n^N_{+,in}},\frac{1}{n^N_{-,in}}) \|_{L^\infty} + \|(F_{+,in}^N,F_{-,in}^N)\|_{\mathfrak E} \leq \frac{M}{2}
\end{align}
Moreover, we assume $\|F_{\pm,in}^N - F_{\pm,in}\|_{\mathfrak E} \to 0$ as $N \to \infty$. In fact, up to taking to a subsequence, we may as well take $\sum_{N\geq 1} \|F_{\pm,in}^N - F_{\pm,in}^{N-1}\|_{\mathfrak E}  < \infty$.

The solution will be given by the following iteration scheme. 
Let 
\[
(F_+^0(t,x,v),F_-^0(t,x,v)) = (\eta(v),\eta(v)),
\]  where $\eta(v)$ is any nonnegative standard $C^\infty$, compactly supported cutoff. We also set $\phi^0(t,x) = 0$.

 Up to taking $M$ larger, we can guarantee that $(F_+^0,F_-^0)$ satisfy the same condition as \eqref{eq:M/2}.
 Then, for each $N \in \mathbf N_0$, define  $(F_+^{N+1},F_-^{N+1}) \in L^2([0,1]\times\mathbf T^3\times\mathbf R^3)$ to be the solution to 
\begin{align}\label{eq:iteration+}
\{\partial_t  +v \cdot \nabla_x  -\nabla_x \phi ^N \cdot \nabla_v\} F_+^{N+1} &= Q(F_+^N + F_-^N, F_+^{N+1}),  \\
\{ \partial_t  + v \cdot \nabla_x  + \nabla_x \phi^N\cdot \nabla_v\}   F_-^{N+1} &=Q(F_+^N+F_-^N, F_-^{N+1}), \label{eq:iteration-}
\end{align}
where $\phi^N = -4\pi\Delta_x^{-1} \int_{\mathbf R^3}F_+^N - F_-^N dv$ with initial data $(F_{+,in},F_{-,in})$. The existence of the iterates is guaranteed by Proposition \ref{prop:linearLWP}. In particular, we have $e^{p|w|^2} \nabla_{t,x,v}^r F_\pm^N \in L^\infty([0,1]\times \mathbf T^3 \times \mathbf R^3)$ for all $r,N\in \mathbf N_0$, $p \geq 0$. We will also denote $\delta F_\pm^N = F_{\pm}^N - F_{\pm}^{N-1}$, and similarly for $\phi^N$, etc. Moreover, for each $N$, we have $F_\pm^N \geq 0$.

Now, for each $N \geq 0$, let $[0,T_N]\subset [0,1]$ be the largest such interval such that
\begin{align}
 \|(\frac{1}{n_{+}^N},\frac{1}{n_{-}^N}) \|_{L^\infty([0,T_N]\times \mathbf T^3)} + \|(F_{+}^N,F_{-}^N)\|_{\mathfrak E_{T_N}} + \|(F_+^N,F_-^N)\|_{\mathfrak D_{T_N}}&\leq M. \label{eq:bootstrapN1}
\end{align}
Let $ \tau \in (0,1]$ also be a small constant to be chosen later. We shall show that $\tau \leq T_{N}$ for all $N \geq 1$. 

We prove this by induction. In the base case $N=0$, this holds trivially, taking $M$ larger if necessary.   
 Now, assume $\tau \leq \inf_{0\leq M \leq N-1} T_M$ for some $N \geq 1$.  In particular, we have $\|\phi^{N-1}(t,\cdot)\|_{H^{s+2}_x}\leq C_M$ for all $t \in [0,\tau]$. Thus, we can apply Proposition \ref{prop:aprioriFG} to get
 \begin{align}
 \|F^N_\pm\|_{\mathfrak E_\tau}  + \frac{1}{C_M} \|F^N_\pm\|_{\mathfrak D_\tau} \leq e^{C_M \sqrt{\tau}}\|F_{\pm,in}^N\|_{\mathfrak E}.
 \end{align}
Since $ \|F_{\pm,in}\|_{\mathfrak E}  \leq \frac{M}{2} $, we may choose $\sqrt{\tau}$ sufficiently small so that
  \begin{align}\label{eq:smallEnergyGrowth}
 \|F^N_\pm\|_{\mathfrak E_\tau}  +  \|F^N_\pm\|_{\mathfrak D_\tau} \leq (1+ C_M\sqrt{\tau})\frac{M}{2}.
  \end{align}
 Moreover, we have the continuity equation
\begin{align}
\partial_t n_\pm^N(t,x)+ \nabla_x \cdot \int_{\mathbf R^3}\highlightEdit{v} F_\pm^N (t,x,v) dv \highlightEdit{=0},
\end{align}
which implies
\begin{align}
\|\partial_t n_\pm^N \|_{L^\infty([0,\tau];H^{s-1})} \lesssim \|F^N_{\pm}\|_{\mathfrak E}. 
\end{align}
Therefore, $\|n_\pm^N(t,x) - n_{\pm,in}(x)\|_{L^\infty([0,\tau]\times\mathbf T^3)} \lesssim_M \tau$. 
In particular, taking $\tau$ small enough,
\begin{align}
 \|(\frac{1}{n_{+}^N},\frac{1}{n_{-}^N}) \|_{L^\infty([0,\tau]\times \mathbf T^3)} \leq (1 + C_M \tau)   \|(\frac{1}{n_{+,in}},\frac{1}{n_{-,in}}) \|_{L^\infty([0,\tau]\times \mathbf T^3)}.
\end{align}
Combining this with \eqref{eq:smallEnergyGrowth}, we see that $\tau$ can be taken small enough, depending only on $M$, such the condition \eqref{eq:bootstrapN1} holds if we replace $T_N$ with $\tau$. Since $T_N$ is the largest number with this property, we have $\tau \leq T_N$.  This completes the inductive step.

In conclusion, there exists some $\tau= \tau(M)$ such that
\begin{align}
 \|(\frac{1}{n_{+}^N},\frac{1}{n_{-}^N}) \|_{L^\infty([0,\tau]\times \mathbf T^3)} + \|(F_{+}^N,F_{-}^N)\|_{\mathfrak E_{\tau}} + \|(F_+^N,F_-^N)\|_{\mathfrak D_{\tau}}&\leq M, \label{eq:bddness}
\end{align}
for all $N \geq 1$.\\

\noindent \textbf{Step 2 (convergence):} For all $N\geq 0$, let $\delta F^N_\pm = F^N_\pm - F^{N-1}$, etc. Then, by Proposition \ref{prop:difference}, and the boundedness proved in previous step, we have for all $N \geq 2$
\begin{align}
\|(\delta F_+^N, \delta F_-^N)\|_{\mathfrak E'_\tau \cap \mathfrak D'_\tau  } &\lesssim_M \|(\delta F_{+,in}^N,\delta F_{-,in}^N)\|_{\mathfrak E'}^2 \\
&\quad +   \int_0^\tau  \|(\delta F_{+}^{N-1},\delta F_{-}^{N-1})\|_{\mathfrak E'}^{\frac{1}{2}}  \|(\delta F_{+}^{N-1},\delta F_{-}^{N-1})\|_{\mathfrak E' \cap \mathfrak D'}^\frac{3}{2} dt \\
&\lesssim_M \|(\delta F_{+,in}^N,\delta F_{-,in}^N)\|_{\mathfrak E'} ^2+ \sqrt{\tau}   \|(\delta F_{+}^{N-1},\delta F_{-}^{N-1})\|_{\mathfrak E'_\tau\cap \mathfrak D'_\tau}^2.
\end{align}
Taking $N' \geq 2$, and summing the above over  all $2 \leq N \leq N'$, we have
\begin{align}
\sum_{1\leq N \leq N'} \|(\delta F_+^N, \delta F_-^N)\|_{\mathfrak E'_\tau \cap \mathfrak D'_\tau  } &\lesssim_M \sum_{1\leq N \leq N'} \|(\delta F_{+,in}^N, \delta F_{-,in}^N)\|_{\mathfrak E'_\tau \cap \mathfrak D'_\tau  } \\
&\quad +  \|(\delta F_{+}^{1},\delta F_{-}^{1})\|_{\mathfrak E'_\tau\cap \mathfrak D'_\tau}\\
&\quad  +\tau^{\frac{1}{4}} \sum_{1\leq N \leq N'} \|(\delta F_+^N, \delta F_-^N)\|_{\mathfrak E'_\tau \cap \mathfrak D'_\tau  }.
\end{align}
Now, by step 1, we know $  \|(\delta F_{+}^{1},\delta F_{-}^{1})\|_{\mathfrak E'_\tau\cap \mathfrak D'_\tau}  \lesssim_M 1$. Moreover, 
\[
\sum_{N = 1}^\infty \|(\delta F_{+,in}^N, \delta F_{-,in}^N)\|_{\mathfrak E'_\tau \cap \mathfrak D'_\tau  } < \infty.\]
 Hence, by taking $\tau$ sufficiently small depending on $M$, we have that 
 \[
 \sum_{N=1}^\infty \|(\delta F_+^N, \delta F_-^N)\|_{\mathfrak E'_\tau \cap \mathfrak D'_\tau  } <\infty. \]
In particular, $(F_+^N,F_-^N)$ are Cauchy in $\mathfrak E'_\tau \cap \mathfrak D_\tau'$. Letting $(F_+,F_-)$ be the limit, we see that $(F_+,F_-) \in C([0,\tau];\mathfrak E')\cap \mathfrak E_\tau \cap \mathfrak D_\tau$, and $n_+,n_-$ enjoy uniform bounds from below.  It is easy to check that $(F_+,F_-)$ satisfy \eqref{eq:VPLunit} in a weak sense.\\

 \noindent \textbf{Step 3 (uniqueness):} The solution constructed by the previous steps is unique as a direct consequence of Proposition \ref{prop:difference}.\\
 
 \noindent \textbf{Step 4 (blow-up criterion):} Since $\tau$ depends only on $M$, we can continue the solution uniquely up until either $\|1/n_\pm(t,\cdot)\|_{L^\infty}$ or $\|F_+(t,\cdot)\|_{\mathfrak E}$ blow-up. \qed

\subsection{Extension to the VPL-Ion system and Landau equation}

The extension to the Landau equation is a trivial modification (and simplification) of the proof given above. On the other hand, the extension to the \eqref{eq:VPLion''}, we must show that solutions Poincar\'e-Poisson system \eqref{eq:PPsystem} below have good boundedness and continuity properties: fixing $G (t,x,v)$, we consider solutions to the system
\begin{equation}
\begin{aligned}\label{eq:PPsystem}
&\frac{d}{dt} \left\{\frac{3}{2\beta(t)} + \iint_{\mathbf T^3 \times \mathbf R^3} \frac{|v|^2}{2} G(t,x,v) dx dv+ \frac{1}{8\pi}  \int_{\mathbf T^3} |\nabla_x \phi (t,x) |^2 dx \right\} = 0,\\
& \beta(0) = \beta_{in},\\
&-\Delta_x  \phi(t,x) = 4\pi ( \int_{\mathbf T^3}G(t,x,v) dv- e^{\beta (t) \phi(t,x)}).
\end{aligned}
\end{equation}

\begin{lemma} \label{lem:PPsystemEstimates} The following statements hold:\\

\noindent(i) Suppose $G \in C([0,T];\mathfrak E')$, $\inf_{x} \int_{\mathbf R^3} G dv >0$, and $\beta_{in} > 0$. Then for some $T >0$, then there exists a unique solution $(\beta, \phi) \in C([0,T^*];\mathbf R_+\times H^{s+2}(\mathbf T^3))$ to \eqref{eq:PPsystem}.  More precisely, let $M >0$ be large enough and $T >0$ small enough so that
\begin{align} \label{eq:GPPbootstrap}
|\ln(\beta_{in})| + \|\frac{1}{\int G dv}\|_{L^\infty([0,T]\times \mathbf T^3)} +\|G\|_{\mathfrak E_T} \leq M
\end{align}
and
\begin{align}\label{eq:kinEnergySmall}
\sup_{t\in[0,T]}\int_{\mathbf T^3 \times \mathbf R^3} \frac{|v|^2}{2} ( G(t,\cdot) - G_{in}  ) dx dv \leq e^{-M}.
\end{align}
Then, $(\beta,\phi)$ exists for all $t \in [0,T]$  and 
\begin{align}\label{eq:aprioriBetaPhi}
\|\ln(\beta)\|_{L^\infty([0,T])} + \|\phi\|_{L^\infty([0,T];H^{s+2})} \lesssim_M 1.
\end{align}\\
(ii) For each $\alpha \in\{1,2\}$ suppose $G^\alpha \in C([0,T];\mathfrak E')\cap \mathfrak E_T$, $\beta_{in}^\alpha \in \{1,2\}$, and let $(\beta^\alpha,\phi^\alpha)$ be the corresponding solution  to \eqref{eq:PPsystem} by setting $G = G^\alpha$ and $\beta_{in} = \beta^\alpha_{in}$. Assume also that each $(\beta^\alpha, G^\alpha)$ satisfy the estimate \eqref{eq:GPPbootstrap}. Letting $\delta G = G^2 - G^1$, $\delta \phi = \phi^2 - \phi^1$ and so on, we have the estimates
\begin{align}\label{eq:PPcontraction}
\|\delta \beta\|_{L^\infty([0,T])} + \|\delta \phi\|_{L^\infty([0,T];H^{s+2})}  \lesssim_M \|\delta G\|_{\mathfrak E'} + |\delta \beta_{in}|.
\end{align}

\noindent (iii)
In addition to the assumptions in part (i),  assume $G$ satisfies \eqref{eq:SchwarzG} in Proposition \ref{prop:linearLWP}. Then $\beta \in C^\infty([0,T])$ and $\phi \in C^\infty([0,T]\times \mathbf T^3)$. 

\end{lemma}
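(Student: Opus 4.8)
The plan is to reduce the Poincar\'e--Poisson system \eqref{eq:PPsystem} to an analysis of the stationary \emph{Poisson--Boltzmann equation} $-\Delta\phi = 4\pi(n - e^{\beta\phi})$ with the pair $(\beta,n)$ frozen (here $n := \int_{\mathbf R^3}G\,dv$), coupled to a scalar equation for $\beta(t)$ extracted from the conservation law; this is the structure used in \cite{bardos_maxwellboltzmann_2018,flynn2023massless}, and I will only indicate the points that differ here. For fixed $\beta>0$ and $n\in H^s(\mathbf T^3)$ with $\int n = 1$ and $\inf n>0$, the Poisson--Boltzmann equation has a unique solution $\phi = \Phi[\beta,n]$, the minimizer of the strictly convex coercive functional $\psi\mapsto \tfrac1{8\pi}\int|\nabla\psi|^2 + \tfrac1\beta\int e^{\beta\psi} - \int n\psi$ on $H^1(\mathbf T^3)$. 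The maximum principle gives the pointwise bounds $\beta^{-1}\ln(\inf n)\le \phi\le \beta^{-1}\ln(\sup n)$; an elliptic bootstrap --- using that $H^\sigma(\mathbf T^3)$ for $\sigma>3/2$ is an algebra closed under composition with smooth functions (Moser estimates), so $e^{\beta\phi}\in H^\sigma$ when $\phi\in H^\sigma\cap L^\infty$ --- then upgrades this to $\|\phi\|_{H^{s+2}}\lesssim 1$ with constant depending only on $\beta,\beta^{-1},\|n\|_{H^s},(\inf n)^{-1}$. Differencing two solutions, $\delta\phi := \Phi[\beta_2,n_2]-\Phi[\beta_1,n_1]$ solves $-\Delta\delta\phi + 4\pi b\,\delta\phi = 4\pi\delta n - 4\pi a\,\delta\beta$ with $b\ge c>0$ and $a,b$ bounded in $H^s$, so $\|\delta\phi\|_{H^{s+2}}\lesssim \|\delta n\|_{H^s} + |\delta\beta|$ by coercive elliptic regularity. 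Since $m_0 = 5>3/2$, Cauchy--Schwarz in $v$ gives $\|n\|_{H^s_x}\lesssim\|\langle v\rangle^{m_0}\langle\nabla_x\rangle^s G\|_{L^2_{x,v}}\le\|G\|_{\mathfrak E'}$ and likewise for $\delta n$, so under \eqref{eq:GPPbootstrap} all the above constants become $\lesssim_M 1$.

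To find $\beta$, freeze the time slice and write the conservation law as $\tfrac{3}{2\beta(t)} + E_\phi(\beta(t),t) = A(t)$, where $E_\phi(\beta,t):=\tfrac1{8\pi}\|\nabla\Phi[\beta,n(t)]\|_{L^2}^2$ and $A(t):=\tfrac{3}{2\beta_{in}}+E_\phi(\beta_{in},0)+E_K(0)-E_K(t)$ with $E_K(t):=\iint\tfrac{|v|^2}{2}G(t)$ a known continuous function of $t$ obeying $|E_K(t)-E_K(t')|\lesssim\|G(t)-G(t')\|_{\mathfrak E'}$. The map $\beta\mapsto\tfrac3{2\beta}+E_\phi(\beta,t)$ is continuous, tends to $+\infty$ as $\beta\downarrow0$ and to $0$ as $\beta\uparrow\infty$, so this equation always has a solution $\beta(t)$. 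The a priori bounds come from energy conservation: the lower bound $\beta(t)\gtrsim_M 1$ is immediate from $\tfrac3{2\beta(t)}\le\mathcal E(0):=\tfrac3{2\beta_{in}}+E_K(0)+E_\phi(\beta_{in},0)\lesssim_M 1$. For the upper bound one first proves the decay $E_\phi(\beta,t)\lesssim_M\beta^{-2}$: setting $w:=\beta\Phi[\beta,n]-\ln n$, so that $-\Delta w = 4\pi\beta n(1-e^w)+\Delta\ln n$, and testing against $w$ (using $(1-e^w)w\le 0$ and $n>0$) gives $\|\nabla w\|_{L^2}\le\|\nabla\ln n\|_{L^2}$, hence $\|\nabla\Phi[\beta,n]\|_{L^2}\le 2\beta^{-1}\|\nabla\ln n\|_{L^2}\lesssim_M\beta^{-1}$. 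Then \eqref{eq:kinEnergySmall} gives $A(t)\ge\tfrac3{2\beta_{in}}-e^{-M}\ge\tfrac12 e^{-M}$, so a continuation argument controls the (continuous, by the construction below) solution: were $\beta(t_0)$ to reach a large value $\Lambda$, then $\tfrac3{2\Lambda}=A(t_0)-E_\phi(\Lambda,t_0)\ge\tfrac12 e^{-M}-C_M\Lambda^{-2}$ forces $\Lambda\lesssim_M 1$, a contradiction; hence $|\ln\beta(t)|\lesssim_M 1$, and \eqref{eq:aprioriBetaPhi} follows with the $H^{s+2}$ bound above. Existence of $(\beta,\phi)$ on all of $[0,T]$, and uniqueness, then follow by mollifying $G$, solving the (now smooth) system, and passing to the limit using the contraction estimate of part (ii) --- which, with $\delta G\equiv0$ and $\delta\beta_{in}=0$, gives uniqueness directly.

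For part (ii), the Lipschitz estimate from the first paragraph already gives $\|\delta\phi(t)\|_{H^{s+2}}\lesssim_M\|\delta G(t)\|_{\mathfrak E'}+|\delta\beta(t)|$. For $\delta\beta$, subtracting the two conservation identities yields $-\tfrac32\tfrac{\delta\beta(t)}{\beta^1(t)\beta^2(t)}=[\mathcal E^2(0)-\mathcal E^1(0)]-[E_K^2(t)-E_K^1(t)]-\tfrac1{8\pi}\langle\nabla\delta\phi(t),\nabla(\phi^1+\phi^2)(t)\rangle$; using $\beta^\alpha\gtrsim_M1$, the $\mathfrak E'$-control of the kinetic energies, the bound $|\mathcal E^2(0)-\mathcal E^1(0)|\lesssim_M\|\delta G(0)\|_{\mathfrak E'}+|\delta\beta_{in}|$ (via $\|\delta\phi(0)\|_{H^1}$), and the Lipschitz bound on $\|\delta\phi(t)\|_{H^1}$, one gets a closed inequality for $|\delta\beta(t)|$ --- the $\|\delta\phi(t)\|_{H^1}$ contribution being absorbed using the non-degeneracy of $\partial_\beta[\tfrac3{2\beta}+E_\phi]$ on the $\beta$-range fixed above, exactly as in \cite{flynn2023massless} --- giving $|\delta\beta(t)|\lesssim_M\|\delta G\|_{\mathfrak E'}+|\delta\beta_{in}|$, and feeding this back produces \eqref{eq:PPcontraction}. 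For part (iii), if $G$ satisfies \eqref{eq:SchwarzG} then $n\in C^\infty([0,T]\times\mathbf T^3)$ and $E_K\in C^\infty([0,T])$, hence $A\in C^\infty$; the implicit function theorem (the linearization $-\Delta+4\pi\beta e^{\beta\phi}$ being invertible) makes $\Phi$, hence $E_\phi$, smooth in $(\beta,n)$, so $\beta\in C^\infty([0,T])$ from the scalar equation and $\partial_\beta[\tfrac3{2\beta}+E_\phi]\ne0$, and finally $\phi(t)=\Phi[\beta(t),n(t)]\in C^\infty([0,T]\times\mathbf T^3)$, smoothness in $x$ being elliptic regularity.

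The step I expect to be the main obstacle is the \emph{upper} bound on $\beta$ in the second paragraph, together with the closely related absorption in the contraction estimate: in contrast to the lower bound it is not a soft consequence of energy conservation, and it is exactly here that the smallness hypothesis \eqref{eq:kinEnergySmall} enters, in tandem with the quantitative decay $E_\phi(\beta,t)=O_M(\beta^{-2})$ of the Poisson--Boltzmann energy as $\beta\to\infty$.
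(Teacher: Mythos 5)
Your proposal is correct, and at the level of architecture it matches the paper: both reduce \eqref{eq:PPsystem} to the Poisson--Boltzmann equation constrained by the conserved energy, both use \eqref{eq:kinEnergySmall} only to keep that energy above $\tfrac12 e^{-M}$, and both obtain \eqref{eq:PPcontraction} and part (iii) by linearizing/differencing. The implementation differs in genuinely useful ways. For existence you solve the elliptic problem variationally and decouple $\beta$ into the scalar equation $\tfrac{3}{2\beta}+E_\phi(\beta,t)=A(t)$, solved by the intermediate value theorem using your decay $E_\phi(\beta,t)\lesssim_M\beta^{-2}$ (via $w=\beta\phi-\ln n$); the paper instead quotes Theorem 1.14 of \cite{bardos_maxwellboltzmann_2018} for the coupled pair at each admissible energy level, and gets the upper bound on $\beta$ from the weaker but sufficient bound $\|\nabla_x\phi\|_{L^2}^2\lesssim_M\beta^{-1}$ (maximum principle plus testing the Poisson equation with $\phi$). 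For (ii)--(iii) the paper works with the joint map $\mathscr F(\gamma,\psi)$ and proves the quantitative invertibility $\|(\dot\gamma,\dot\psi)\|_{\mathbf R\times H^{s+2}}\le C_{\gamma,\psi}\|d\mathscr F_{(\gamma,\psi)}(\dot\gamma,\dot\psi)\|_{\mathbf R\times H^{s}}$, from which time continuity, \eqref{eq:PPcontraction} (via the mean value theorem) and the induction on $\partial_t^k$ in (iii) all follow; your route (coercive elliptic differencing for $\delta\phi$, then the differenced conservation law for $\delta\beta$) yields the same estimates. The one ingredient you defer to \cite{flynn2023massless} is exactly the heart of the paper's Step 2 and should be proved rather than cited, since it is also what gives uniqueness of your $\beta(t)$ (strict monotonicity of $\beta\mapsto\tfrac3{2\beta}+E_\phi(\beta,t)$) and the legitimacy of the absorption in your $\delta\beta$ estimate: with $R:=\gamma e^{\gamma\psi}-\tfrac1{4\pi}\Delta_x$ one has, since $R\ge\gamma e^{\gamma\psi}>0$ as quadratic forms and hence $\langle g,R^{-1}g\rangle_{L^2_x}\le\langle g,(\gamma e^{\gamma\psi})^{-1}g\rangle_{L^2_x}$ for $g=e^{\gamma\psi}\psi$,
\begin{align*}
-\frac1{4\pi}\langle\Delta_x\psi,\,R^{-1}(e^{\gamma\psi}\psi)\rangle_{L^2_x}
=\langle\psi,\,e^{\gamma\psi}\psi\rangle_{L^2_x}-\gamma\langle e^{\gamma\psi}\psi,\,R^{-1}(e^{\gamma\psi}\psi)\rangle_{L^2_x}\ \ge\ 0,
\end{align*}
i.e.\ $\partial_\beta E_\phi\le 0$, so the denominator in the $\dot\gamma$ (or $\delta\beta$) formula is at least $\tfrac{3}{2\gamma^2}\gtrsim_M 1$ and $|\delta\beta|$ is recovered without circularity. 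With this two-line computation inserted your argument closes; note in passing that the correct sign is a minus in front of the $\gamma$-term (the paper's displayed intermediate step has it as a plus), and nonnegativity then follows from the resolvent comparison above rather than termwise.
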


\begin{proof} 

\noindent \textbf{Step 1 (proof of (i), except for time continuity):} We first construct solutions to \eqref{eq:PPsystem} obeying the estimates as in part (i). We shall show $(\beta,\phi) \in L^\infty([0,T];\mathbf R_+ \times H^{s+2}(\mathbf T^3))$ for now, and prove continuity in a later step.

For existence and uniqueness of $(\beta,\phi) \in L^\infty([0,T];\mathbf R_+ \times H^2(\mathbf T^3))$ solving \eqref{eq:PPsystem}, we state (a special case of) part (ii) of Theorem 1.14 in \cite{bardos_maxwellboltzmann_2018}:  for any $0\leq f(x) \in L^2$ and $E > 0$, there exists a unique solution $(\gamma, \psi) \in \mathbf R_+ \times H^2(\mathbf T^3)$ to the system
\begin{align} \label{eq:gamma}
\frac{3}{2\gamma} +\frac{1}{8\pi} \int_{\mathbf T^3} |\nabla_x \psi|^2 dx = E, \\
-\Delta_x \psi =4\pi (f(x)  - e^{\gamma \psi}).\label{eq:psi}
\end{align}
Moreover, for all $\gamma >0$, we can find a unique $\psi \in H^2$ solving only the second line. 
Now, to construct $(\beta ,\phi)$, we first take $\phi_{in} = \psi$ to solve \eqref{eq:psi} with $\gamma = \beta_{in}$.
Now, $\phi_{in}$ exists uniquely in $H^2$.

 In order to solve for $(\beta,\phi)$ for $t >0$, we must ensure that the first line of \eqref{eq:PPsystem}, when integrated on $[0,t]$, gives an admissible condition on $(\beta,\phi)$. Specifically, we take $(\beta,\phi) = (\gamma,\psi)$ as in \eqref{eq:gamma} and \eqref{eq:psi}, with $f(t,x) = \int_{\mathbf R^3} G dv$. The quantity $E$, in accordance with \eqref{eq:PPsystem}, is given by
\begin{align}\label{eq:posCond}
E(t) :=\int_{\mathbf T^3 \times \mathbf R^3} \frac{|v|^2}{2} (G_{in}  - G(t,\cdot)) dx dv+ \frac{3}{2\beta_{in}} +\frac{1}{8\pi} \int_{\mathbf T^3} |\nabla_x \phi_{in}|^2 dx.
\end{align}
Such a solution $(\beta,\phi)$ with $E = \frac{3}{2\beta} + \frac{1}{8\pi} \int_{\mathbf T^3} |\nabla_x\phi|^2dx$ exists as long as $E(t) > 0$. Now, by \eqref{eq:GPPbootstrap} and \eqref{eq:kinEnergySmall},
\begin{align}\label{eq:energyLowerBound}
E(t) \geq -e^{-M} + \frac{3}{2} e^{-M} \geq \frac{1}{2}e^{-M},
\end{align}
guaranteeing the desired condition.


Next, we prove that the assumptions \eqref{eq:GPPbootstrap} and \eqref{eq:kinEnergySmall} imply \eqref{eq:aprioriBetaPhi}. First, we bound $\beta$ from below. Since $E(t) \lesssim_M 1$, indeed $\frac{1}{\beta} \lesssim_M 1$. 

To get an upper bound on $\beta$, we first secure an $L^\infty$ bound on $\phi$. Applying  the maximum principle to the third line of \eqref{eq:PPsystem} we have
\begin{align}
&0\leq \max_x \int_{\mathbf R^3} G dv - e^{ \beta \max_x \phi} ,\\
&0\geq \min_x\int_{\mathbf R^3} G dv - e^{\beta \min_x \phi},
\end{align} and hence
\begin{align}
 \beta \|\phi\|_{L^\infty_x} \leq \|\ln(\int_{\mathbf R^3} G dv)\|_{L^\infty_x} \lesssim_M 1.
\end{align}
If we instead multiply \eqref{eq:PPsystem} by $\phi$, we may apply the above to get
\begin{align}
\frac{1}{4\pi} \|\nabla_x\phi\|_{L^2}^2& = \langle \phi, \int_{\mathbf R^3} G dv-e^{\beta \phi}\rangle_{L^2_x} \\
&\lesssim \|\phi\|_{L^\infty_x} (C_M + e^{\beta    \|\phi\|_{L^\infty_x}}) \\
&\lesssim_M \highlightEdit{\frac{1}{\beta}.}
\end{align}
This implies the upper bound on $\beta$: by \eqref{eq:energyLowerBound},
\begin{align}
\frac{C_M}{\beta} \geq \frac{3}{2\beta} + \frac{1}{8\pi} \|\nabla_x\phi\|_{L^2_x}^2 \geq E \geq \frac{e^{-M}}{2}.
\end{align}
Thus, $\beta \lesssim_M 1$. Combined with the lower bound, we get $|\ln(\beta)|\lesssim_M 1$.

We now control $\|\phi\|_{H^{s+2}_x}$. Applying $|\nabla_x|^s$ to the equation for $\phi_*$, we have
\begin{align}
\frac{1}{4\pi } |\nabla_x |^{s+2} \phi =  |\nabla_x |^s \{ \int_{\mathbf R^3} G dv - e^{ \phi_*}\}.
\end{align}
Using  Theorem 5.2.6 in   \cite{MePise}, 
\begin{align}
\|e^{\beta \phi}\|_{H^{s}_x} \leq C_{\beta, \|\phi\|_{L^\infty_x}} (\|\phi\|_{H^{s}_x} +1) \lesssim_M  \|\phi\|_{H^{s}_x} + 1.
\end{align}
Hence,
\begin{align}
\|\phi\|_{H^{s + 2}_x} \lesssim_M 1+ \|\int_{\mathbf R^3} G dv \|_{H^s_x} + \|\phi\|_{H_x^{s}} \lesssim_M 1 + \|\phi\|_{H^{s+2}_x}^{\frac{s}{s+2}} \|\phi\|_{L^2_x}^{\frac{2}{s+2}}.
\end{align}
By interpolation, we have $\|\phi\|_{H^{s+2}_x} \lesssim_M 1$.\\


\noindent \textbf{Step 2 (control on differences, linearization):} Before proving (ii) and (iii), we first demonstrate how to control differences in the system \eqref{eq:gamma} and \eqref{eq:psi}: first, define the map $\mathscr F : \mathbf R_+ \times H^{s+2}(\mathbf T^3) \to \mathbf R_+ \times H^{s}(\mathbf T^3)$, given by
\begin{align}
\highlightEdit{\mathscr F(\gamma,\psi) = \left( \frac{3}{2\gamma} +\frac{1}{8\pi} \int_{\mathbf T^3} |\nabla_x \psi|^2 dx , \quad 
-\Delta_x \psi +4\pi  e^{\gamma \psi}\right).}
\end{align}
Now, let $d \mathscr F_{(\gamma,\psi)}$ denote the Gateaux derivative of $\mathscr F$ at $(\gamma,\psi)$, i.e. given $(\dot \gamma, \dot \psi) \in \mathbf R \times H^{s+2}(\mathbf T^3)$, define
\begin{align}
\highlightEdit{d \mathscr F_{(\gamma,\psi)}(\dot \gamma,\dot \psi) = \left( -\frac{3\dot \gamma }{2\gamma^2} +\frac{1}{4\pi} \int_{\mathbf T^3} \nabla_x \psi \cdot \nabla_x \dot \psi dx , \quad 
-\Delta_x \dot \psi +4\pi  e^{\gamma \psi} (\dot \gamma \psi + \gamma \dot \psi ) \right).}
\end{align}
Clearly, for $(\gamma,\psi) \in \mathbf R_+ \times H^{s+2}$, this is a bounded linear operator in the sense $dF_{\gamma,\psi} \in \mathcal L(\mathbf R \times H^{s+2}, \mathbf R \times H^{s})$.
We can also show a lower bound on this operator,
\begin{align}\label{eq:gateauxLowerBound}
\|(\dot \gamma, \dot \psi)\|_{\mathbf R \times H^{s+2}} \leq C_{\gamma,\psi} \|d\mathscr F_{(\gamma,\psi)} (\dot \gamma, \dot \psi)\|_{\mathbf R \times H^s}
\end{align}
More precisely, $C_{\gamma,\psi}$ is an increasing function of $|\ln(\gamma)|$ and $\|\psi\|_{H^{s+2}}$. To see this, first set $d \mathscr F_{(\gamma,\psi)}(\dot \gamma,\dot \psi) = (\dot E, \dot f)$. We first solve for $\dot \psi$,
\begin{align}\label{eq:dot_psiSolved}
\dot \psi = (  \gamma e^{\gamma \psi}-\frac{1}{4\pi} \Delta_x)^{-1} (\dot f -e^{\gamma \psi} \psi \dot \gamma ).
\end{align}
Hence, 
\begin{align}
\highlightEdit{\dot E = - \frac{3\dot \gamma}{2\gamma^2} -\frac{1}{4\pi} \int_{\mathbf T^3}\Delta_x  \psi  ( \gamma e^{\gamma \psi}-\frac{1}{4\pi} \Delta_x )^{-1}   (\dot f -e^{\gamma \psi} \psi \dot \gamma )  dx.}
\end{align}
This allows us to solve for $\dot \gamma$:
\begin{align}\label{eq:dotGamIdentity}
\highlightEdit{\dot \gamma  =- \frac{\dot E +\frac{1}{4\pi}\langle \Delta_x \psi,(\gamma e^{\gamma \psi} - \frac{1}{4\pi} \Delta_x)^{-1}\dot f\rangle_{L^2}  }{\frac{3}{2\gamma^2} - \frac{1}{4\pi} \langle \Delta_x\psi, (\gamma e^{\gamma \psi} - \frac{1}{4\pi} \Delta_x)^{-1} (e^{\gamma \psi} \psi)\rangle_{L^2}  }.}
\end{align}
To then bound $|\dot \gamma| \leq C_{\gamma, \psi} \|(\dot E, \dot f)\|_{\mathbf R\times H^s}$, it suffices to show the denominator is bounded from below by some $1/C_\gamma$. This follows by showing the second term in the denominator is nonnegative:
\begin{align}
- \frac{1}{4\pi} \langle \Delta_x\psi, (\gamma e^{\gamma \psi} - \frac{1}{4\pi} \Delta_x)^{-1} (e^{\gamma \psi} \psi)\rangle_{L^2}  \geq 0.
\end{align}
Indeed,
\begin{align}
& -\frac{1}{4\pi} \langle \Delta_x \psi,(\gamma e^{\gamma \psi} -\frac{1}{4\pi} \Delta_x)^{-1}(e^{\gamma \psi} \psi)  \rangle_{L^2_x}  \\
 &=   \langle  (\gamma e^{\gamma \psi} - \frac{1}{4\pi} \Delta_x)\psi,(\gamma e^{\gamma \psi} -\frac{1}{4\pi} \Delta_x)^{-1}(e^{\gamma \psi} \psi)\rangle_{L^2_x} \\
 &\quad  - \gamma \langle e^{\gamma \psi}\psi, (\gamma e^{\gamma \psi} -\frac{1}{4\pi} \Delta_x)^{-1}(e^{\gamma \psi} \psi)\rangle_{L^2_x}\\
 &= \langle  \psi,e^{\gamma \psi}\psi\rangle_{L^2_x} + \gamma \langle e^{\gamma \psi}\psi, (\gamma e^{\gamma \psi} -\frac{1}{4\pi} \Delta_x)^{-1}(e^{\gamma \psi} \psi)\rangle_{L^2_x} \geq 0.
\end{align}
Now that we have bounded $\dot \gamma$, we rely on \eqref{eq:dot_psiSolved} to get $\|\dot \psi\|_{H^{s+2}} \leq C_{\gamma, \psi} \|(\dot E, \dot f)\|_{\mathbf R\times H^s}$. We conclude \eqref{eq:gateauxLowerBound} holds true.

We can use this bound to control finite differences of $\mathscr F$, as well. Consider two $(\gamma_1,\psi_1)$ and $(\gamma_2,\psi_2)$. For each $\lambda \in [1,2]$, define $(\gamma_\lambda,\psi_\lambda) = (\lambda - 1)(\gamma_2,\psi_2)  + (2-\lambda)(\gamma_1,\psi_1)$. Then, by the mean value theorem, there exists some $\rho \in [1,2]$ such that
\begin{align}
\mathscr F(\gamma_2,\psi_2) - \mathscr F(\gamma_1,\psi_1)  = \int_1^2 \frac{d}{d\lambda} \mathscr F(\gamma_\lambda,\psi_\lambda) d\lambda = d\mathscr F_{(\gamma_{\rho},\psi_{\rho})}(\gamma_2-\gamma_1, \psi_2-\psi_1).
\end{align}
Now $|\ln(\gamma_\rho)| \lesssim \sup_{j\in\{1,2\}} |\ln(\gamma_j)|$ and $\|\psi_\rho\|_{H^{s+2}} \leq \sup_{j\in\{1,2\}} \|\psi_j\|_{H^{s+2}}$. Therefore,
\begin{align}\label{eq:FdifferenceBd}
\|(\gamma_2-\gamma_1, \psi_2-\psi_1)\|_{\mathbf R\times H^{s+2}}\leq  (\sup_{j \in \{1,2\}}C_{\gamma_j,\psi_j}) \|\mathscr F(\gamma_2,\psi_2) - \mathscr F(\gamma_1,\psi_1) \|_{\mathbf R\times H^s}.
\end{align}
This estimate will prove useful in the subsequent steps.
\\

\noindent\textbf{Step 3 (time continuity):}  Using the previous step, we complete the proof of (ii). For any two $t_1,t_2 \in [0,T]$, we have by \eqref{eq:FdifferenceBd}
\begin{align}
&\|(\beta(t_2) - \beta(t_1), \phi(t_2,\cdot) - \phi(t_1,\cdot))\|_{\mathbf R\times H^{s+2}} \\
&\quad \lesssim_M \|(E(t_2) - E(t_1), \int_{ \mathbf R^3} G(t_2,x,v) - G(t_1,x,v)dv)\|_{\mathbf R\times H^s}.
\end{align}
Recall \eqref{eq:posCond} for the definition of $E$. Hence,
\begin{align}
\|(\beta(t_2) - \beta(t_1), \phi(t_2,\cdot) - \phi(t_1,\cdot))\|_{\mathbf R\times H^{s+2}}  \lesssim_M \|G(t_2,\cdot) - G(t_1,\cdot)\|_{\mathfrak E}.
\end{align}
Now since $G \in C([0,T];\mathfrak E)$, we conclude that $(\beta,\phi) \in C([0,T];\mathbf R_+ \times H^{s+2})$. This concludes \\

\noindent \textbf{Step 4 (proof of (ii)):} Similarly as in the previous step, we apply \eqref{eq:FdifferenceBd}. For each $t \in [0,T]$,
\begin{align}
\|(\delta \beta(t),\delta \phi(t))\|_{\mathbf R\times H^{s+2}} \lesssim_M \|(\delta E(t),\delta n_+(t,\cdot))\|_{\mathbf R\times H^{s}} 
\end{align}
Here,
\begin{align}
\delta E(t) &=E^2(t) - E^1(t),\\
E^j(t) &= \int_{\mathbf T^3 \times \mathbf R^3} \frac{|v|^2}{2} (G_{in}^j  - G^j(t,\cdot)) dx dv+ \frac{3}{2\beta_{in}^j} +\frac{1}{8\pi} \int_{\mathbf T^3} |\nabla_x \phi_{in}^j|^2 dx.
\end{align}
It is then straightforward to get
\begin{align}
\|(\delta \beta(t),\delta \phi(t))\|_{\mathbf R\times H^{s+2}}  \lesssim \|\delta G\|_{\mathfrak E'} + \|(\delta \beta_{in},\delta \phi_{in})\|_{\mathbf R\times H^{s+2}}
\end{align}
Modifying the proof of \eqref{eq:FdifferenceBd} slightly, we can show that $\|\delta\phi_{in}\|_{H^{s+2}} \lesssim_M  \|\delta G\|_{\mathfrak E'}  + |\delta \beta|$, proving \eqref{eq:PPcontraction}.\\

\noindent \textbf{Step 5 (proof of (iii)):} Since $n_+ \in C^\infty([0,T]\times \mathbf T^3)$, it follows that 
\[
\phi \in L^\infty([0,T];C^\infty(\mathbf T^3))
\]
 easily from the estimates in step 1. 

It suffices to then show that all time derivatives of $(\beta,\phi)$ are bounded.
Let $k \geq 0$. Then
\begin{align}
d\mathscr F_{(\beta,\phi)}(\partial_{t}^k \beta,\partial_{t}^k \phi)= (\partial_{t}^k E,\partial_t^k n_+) + R_k.
\end{align}
where in $R_k$, we have terms involving the time derivatives of $E$, $n_+$, $\beta$ and $\phi$, \textit{up to order} $k -1$.
In particular, $\|R\|_{L^\infty}$ is controlled by some constant depending only on the $C^{k-1}$ norms of $E$, $n_+$, $\beta$ and $\phi$. Note also that all derivatives of $E$ and $n_+$ are both $C^\infty$ on their respective domains.

Modifying step 2 slightly, it is straightforward to show that
\begin{align}
&\|(\partial_{t}^k \beta,\partial_{t}^k \phi)\|_{L^\infty([0,T]) \times L^\infty([0,T]\times \mathbf T^3)} \\
&\quad \leq C_M \|d\mathscr F_{(\beta,\phi)}(\partial_{t}^k \beta,\partial_{t}^k \phi)\|_{L^\infty([0,T]) \times L^\infty([0,T]\times \mathbf T^3)}.
\end{align}
Hence, if $\partial_t^{k-1}\beta$ and $\partial_t^{k-1} \phi$ are both bounded on their respective domains, then so are $\partial_t^{k}\beta$ and $\partial_t^{k} \phi$ . This allows us to induct on $k$, and conclude $\beta$ and $\phi$ are both $C^\infty$ on their respective domains.
\end{proof}

With Lemma \ref{lem:PPsystemEstimates} in hand, we are ready to prove part (ii) of Theorem \ref{thm:LWP}.\\

\noindent \textit{Proof of Theorem 1.1-\highlightEdit{(i)}:} Some details shall be omitted, as the proof is similar to part \highlightEdit{(ii)} in certain aspects.

Once again, we let $F_{+,in}^N\in C^\infty_c(\mathbf T^3 \times\mathbf R^3) $ be a nonnegative approximating sequence for $F_{+,in}$ in $\mathfrak E$. We also let  $F_{+}^0(t,x,v) = \eta (v)$, where $\eta(v)$ is any nonnegative standard $C^\infty$, compactly supported cutoff, $\beta^0 = 1$ and $\phi^0 = 0$.
We also fix $0< M < \infty$ large enough so that 
\begin{align}
|\ln(\beta_{in})| +  \|\frac{1}{n_{+,in}^N}\|_{L^\infty(\mathbf T^3)} + \|F_{+,in}^N\|_{\mathfrak E} \leq \frac{M}{2}.
\end{align}

We now describe the iteration scheme. Suppose  we are given $(F_+^N,\beta^N,\phi^N)$ with $N \in \mathbf N_0$, with $(F_+^N,\phi^N)$ satisfying the same hypotheses as $G$ and $\phi$ in Proposition \ref{prop:linearLWP}, with $t \in [0,1]$, and $\beta \in C^\infty([0,1])$. 
We  then construct $(F^{N+1},\beta^{N+1},\phi^{N+1})$. First, $F_+^{N+1}$ is the solution to 
\begin{align}
(\partial_t + v\cdot \nabla_x -\nabla_x \phi^N)F_+^{N+1} = Q(F_+^N, F_+^{N+1}),
\end{align}
guaranteed by Proposition \ref{prop:linearLWP}. As for $(\beta^{N+1},\phi^{N+1})$, we would like this pair to solve \eqref{eq:PPsystem} with $G = F_+^N$. Unfortunately Lemma \ref{lem:PPsystemEstimates} only guarantees existence for some small time.

 To get around this, 
we  let $T_N \in [0,1]$ be the largest time such that
\begin{align}\label{eq:iteratesBdd}
&\|\ln(\beta^N)\|_{L^\infty([0,T_N])} +  \|\frac{1}{n_+}\|_{L^\infty([0,T_N]\times\mathbf T^3)} + \|F_+^N\|_{\mathfrak E_{T_N}\cap \mathfrak D_{T_N}} \leq M,\\
&\sup_{t\in[0,T]}\int_{\mathbf T^3 \times \mathbf R^3} \frac{|v|^2}{2} ( F_+^N(t,\cdot) - F_{+,in}^N  ) dx dv \leq e^{-M}.
\end{align}
Clearly, these conditions imply that \eqref{eq:GPPbootstrap} and \eqref{eq:kinEnergySmall} both hold with $G = F^N_+$.
 Then, we let $(\beta_*^{N+1},\phi_*^{N+1})$ be the unique smooth solution to \eqref{eq:PPsystem} on the time interval $[0,T_N]$. We then take $0\leq \chi(t)$ to be a smooth cutoff function on $[0,\infty)$, which is identically 1 on $[0,\frac{1}{2}]$, and identically zero on $[1,\infty)$. We then define
\begin{align}
\beta^{N+1}(t) &= \chi(\frac{t}{T_N}) \beta_*^N(t)  + (1-\chi(\frac{t}{T_N}))\beta_{in},\\
\phi^{N+1}(t) &= \chi(\frac{t}{T_N}) \phi_*^{N+1}(t).
\end{align}
so that $(\beta^{N+1},\phi^{N+1})$ are smooth functions defined for all $t \in [0,1]$.

Next, we show boundedness of the iterates on some small time interval $[0,\tau]$ depending only on $M$. We  let $T_N \in [0,1]$ be the largest time such that
\begin{align}
\|\ln(\beta^N)\|_{L^\infty([0,T_N])} +  \|\frac{1}{n_+}\|_{L^\infty([0,T_N]\times\mathbf T^3)} + \|F_+^N\|_{\mathfrak E_{T_N}\cap \mathfrak D_{T_N}} \leq M.
\end{align}
We argue by induction that $\tau < T_N$.  As before, the case $N =0$ is trivial (taking $M$ larger if necessary). Now, assume that for some $N$, we have $\tau < T_{N'}$ for all $0 \leq N' \leq N$.  As in the proof of part (i), we have
\begin{align}
 \|\frac{1}{n_+^N}\|_{L^\infty([0,\tau]\times\mathbf T^3)} + \|F_+^N\|_{\mathfrak E_{\tau}\cap \mathfrak D_{\tau}} \leq \frac{3}{2} \left(\|\frac{1}{n_{+,in}}\|_{L^\infty([0,\tau]\times\mathbf T^3)} + \|F_{+,in}^N\|_{\mathfrak E}\right).
\end{align}
On the other hand, repeating a computation in step 2 of the proof of Lemma \ref{lem:PPsystemEstimates}, in particular the identity \eqref{eq:dotGamIdentity}, we have for all $t \in [0,T_N]$ that 
\begin{align}
&\underline{\dot  \beta}^{N+1} \\
& =\frac{\partial_t\iint_{\mathbf T^3\times\mathbf R^3} \frac{|v|^2}{2} F_+^N dx dv  -\frac{1}{4\pi}\langle \Delta_x \phi_*^{N+1},(\beta_*^{N+1} e^{\beta_*^{N+1} \phi_*^{N+1}} - \frac{1}{4\pi} \Delta_x)^{-1}\partial_t n_+^N\rangle_{L^2}  }{\frac{3}{2(\beta_*^{N+1})^2} - \frac{1}{4\pi} \langle \Delta_x\phi_*^{N+1}, (\beta_*^{N+1} e^{\beta_*^{N+1} \phi_*^{N+1}} - \frac{1}{4\pi} \Delta_x)^{-1} (e^{\beta_*^{N+1} \phi_*^{N+1}} \phi_*^{N+1})\rangle_{L^2}  }.
\end{align}
With the estimates stated Lemma \ref{lem:PPsystemEstimates} in hand, and the positivity of the denominator (also shown in step 2 of the proof of the Lemma), we see
\begin{align}
 \highlightEdit{| \dot \beta_*^{N+1}(t) | \lesssim_M |\iint_{\mathbf T^3\times\mathbf R^3} \frac{|v|^2}{2}\partial_t F_+^N(t,\cdot) dx dv|+ \|\partial_tn_+^N(t,\cdot)\|_{L^\infty(\mathbf T^3)}.}
\end{align}
As shown in the proof of part \highlightEdit{(ii)}, we can control $ \|\partial_tn_+^N(t,\cdot)\|_{L^\infty(\mathbf T^3)} \lesssim_M 1$. As for the kinetic energy, we have either we are in the case $N =0$, where $\partial_t F_+^N = 0$, or 
\begin{align}
\iint_{\mathbf T^3\times\mathbf R^3} \frac{|v|^2}{2}\partial_t F_+^N(t,\cdot) dx dv = \iint_{\mathbf T^3\times \mathbf R^3 }v\cdot \nabla_x \phi^{N-1} F_+^N  + \frac{|v|^2}{2} Q(F_+^{N-1},F_+^N) dx dv.
\end{align}
The first term is easily bounded in terms of some $C_M$. As for the second, we have
\begin{align}
 &\iint_{\mathbf T^3 \times \mathbf R^3} \frac{|v|^2}{2} Q(F_+^{N-1},F_+^N) dxdv \\
 &\quad =  \iint_{\mathbf T^3 \times \mathbf R^3} (\mathrm{tr}(\Phi*F^{N-1})  + 2 v_i \partial_{v_j} \Phi* F^{N-1} )F^N dxdv.
\end{align}
It's then clear that we can bound 
\begin{align}\label{eq:dtEnergy}
|\iint_{\mathbf T^3\times\mathbf R^3} \frac{|v|^2}{2}\partial_t F_+^N(t,\cdot) dx dv| \lesssim_M 1.
\end{align}
 We conclude that $\|{\dot \beta_*}^{N+1}\|_{L^\infty([0,T_N])} \lesssim_M 1$.  
Thus, 
\begin{align}
\|\beta^{N+1}(t)- \beta_{in}\|_{L^\infty([0,\tau])} = \|\chi(\frac{t}{T_N}) (\beta_*^{N+1}(t)- \beta_{in})\|_{L^\infty([0,\tau])} \lesssim_M \tau.
\end{align}
In particular, we can take $\tau$ small enough depending on $M$ so that 
\begin{align}
\|\ln(\beta^{N+1}(t))\|_{L^\infty([0,\tau])} \leq  \frac{3}{2} |\ln(\beta_{in})|.
\end{align}
We thus have that 
\begin{align}
\|\ln(\beta^{N+1}(t))\|_{L^\infty([0,\tau])} + \|\frac{1}{n_+^N}\|_{L^\infty([0,\tau]\times\mathbf T^3)} + \|F_+^N\|_{\mathfrak E_{\tau}\cap \mathfrak D_{\tau}} \leq \frac{3}{2} M.
\end{align}
On the other hand, by \eqref{eq:dtEnergy}, we can take $\tau$ small enough that
\begin{align}
&\sup_{t\in[0,\tau]}\int_{\mathbf T^3 \times \mathbf R^3} \frac{|v|^2}{2} ( F_+^N(t,\cdot) - F_{+,in}^N  ) dx dv \leq C_M\tau \leq \frac{e^{-M}}{2}.
\end{align}
We have thus shown that $\tau < T_{N+1}$, completing the inductive step. Hence, we have uniform boundedness of the iterates $(F_+^{N},\beta^{N},\phi^N)$ on $[0,\tau]$ in the sense of \eqref{eq:iteratesBdd}. 

In a similar fashion as part \highlightEdit{(ii)}, we can argue that these iterates converge to a weak solution to \eqref{eq:VPLion''}. Up to dividing $\tau$ by 2, we have that $(\beta^N,\phi^N) = (\beta_*^N,\phi^N)$ on $t \in [0,\tau]$. Using Proposition \ref{prop:difference} and part (ii) of Lemma \ref{lem:PPsystemEstimates}, we can show that the increments of  $(F_+^N,\beta^N,\phi^N)$ are summable in $\mathfrak E'_\tau\times L^\infty([0,\tau])\times L^\infty([0,\tau]\times H^{s+2})$, and thus Cauchy. Letting $(F_+,\beta,\phi)$ be the limit of this sequence, it is straightforward to show that it defines a weak solution. 
By Proposition \ref{prop:difference} and Lemma \ref{lem:PPsystemEstimates}, this solution is unique as well. 

As $\tau$ depends only on $M$, we can continue the solution $(F_+,\beta,\phi)$ for some maximal interval $[0,T^*)$. As $T \uparrow T^*$, we have either 
\[
\|F_+\|_{\mathfrak E_T} \to \infty, \quad \|1/n_+\|_{L^\infty([0,T]\times\mathbf T^3)} \to \infty, \text{ or } \|\ln(\beta)\|_{L^\infty([0,T])} \to \infty.\] In the third case,
 Clearly $\beta$ is bounded uniformly from below, due to conserved energy involving $\frac{3}{2\beta}$. On the other hand, the conclusion of Corollary 4.4  in \cite{bardos_maxwellboltzmann_2018} holds in the case of our system as well, \highlightEdit{which implies $\beta$ is uniformly bounded from above by a constant depending only on the initial data. 
 In particular, $\beta$ cannot blow-up. }
 
\highlightEdit{ For the sake of a self-contained presentation, we give the proof of the upper bound of $\beta(t)$ as shown in Corollary 4.4 in \cite{bardos_maxwellboltzmann_2018}. By integrating by parts, we have
 \begin{align}
 \frac{1}{2} \frac{d}{dt} \int_{\mathbf T^3} |E|^2 dx &=  -\int_{\mathbf T^3} \phi \Delta_x \phi_t dx\\
 &= 4\pi \int_{\mathbf T^3} \phi\partial_t\{ n_+ - e^{\beta \phi}\} dx\\
 &=  -4\pi \iint_{\mathbf T^3 \times \mathbf R^3}E\cdot v   F_+ dx dv - 4\pi \int_{\mathbf T^3} \phi\partial_t e^{\beta \phi} dx.
 \end{align}
Above, we used the continuity equation,
 \[
 \partial_t n_+ = -\nabla_x \cdot \int_{\mathbf R^3} v  F_+ dv
 \]
 and integrated by parts once more.
On the other hand, the kinetic energy obeys the equation
 \begin{align}
 \frac{d}{dt} \iint_{\mathbf T^3 \times \mathbf R^3} \frac{ |v|^2}{2} F_+dx dv & =- \iint_{\mathbf T^3 \times \mathbf R^3} \frac{|v|^2}{2} E\cdot \nabla_v F_+dx dv  \\
 &=\iint_{\mathbf T^3 \times \mathbf R^3}  E\cdot v F_+dx dv.
 \end{align}
 Combining these two identities, we find that
 \begin{align}
 \frac{d}{dt}\left(\frac{1}{8\pi} \int_{\mathbf T^3} |E|^2 dx +\int_{\mathbf T^3 \times \mathbf R^3} \frac{ |v|^2}{2} F_+dx dv\right) & = - \int_{\mathbf T^3} \phi\partial_t e^{\beta \phi} dx\\
  &= -\frac{1}{\beta} \frac{d}{dt}  \int_{\mathbf T^3} \beta \phi e^{\beta \phi} dx.
 \end{align}
Here, we use the facts that $\beta\phi \partial_t e^{\beta \phi} = \partial_t\{(\beta \phi - 1)e^{\beta \phi}\}$, and $\int_{\mathbf T^3} e^{\beta \phi} dx = 1$. Combining the above with the middle equation of \eqref{eq:VPLion''}, we have
\[
 \frac{d}{dt} \frac{3}{2\beta}  +\frac{1}{\beta} \frac{d}{dt}  \int_{\mathbf T^3} \beta \phi e^{\beta \phi} dx = 0.
\] 
Multiplying each side by $\beta$, we see that this is equivalent to the conservation law
\[
\frac{d}{dt} \left( \ln(\beta) + \frac{2}{3}\int_{\mathbf T^3} \beta \phi e^{\beta \phi} dx\right) = 0.
\]
In particular, $\beta(t) \leq \exp(\ln(\beta_{in}) + \frac{2}{3}\int_{\mathbf T^3} \beta_{in} \phi_{in} e^{\beta_{in} \phi_{in}} dx)$. This demonstrates that $\beta   \in L^\infty([0,T^*))$, as desired.}

\qed

\bibliographystyle{abbrv} 
\bibliography{refs}

\end{document}